\documentclass[preprint,12pt]{elsarticle}

\usepackage{amssymb}
\usepackage{amsmath}
\usepackage{amsfonts}
\usepackage{amsthm,mathrsfs}
\usepackage{enumitem}
\usepackage{fancyhdr}
\usepackage{eufrak}

\newtheorem{theorem}{Theorem}[section]
\newtheorem{lemma}[theorem]{Lemma}
\newtheorem{corollary}[theorem]{Corollary}
\newtheorem{proposition}[theorem]{Proposition}

\newtheorem{definition}[theorem]{Definition}
\newtheorem{remark}[theorem]{Remark}

\newcommand{\enurom}[1]
{\begin{enumerate}[label=(\roman*)]
#1
\end{enumerate}}

\newcommand{\enualp}[1]
{\begin{enumerate}[label=(\alph*)]
#1
\end{enumerate}}

\newcommand{\ccd}[0]
{(\cdot)}

%\newcommand{\dfrac}[2]
%{{\displaystyle\frac{#1}{#2}}}

\newcommand{\ttnn}[1]
{\textnormal{#1}}

\newcommand{\modulo}[1]
{\left|#1\right|}

\newcommand{\rif}[1]
{(\ref{#1})}

\newcommand{\ssp}[1]
{\hspace{#1mm} }

%\newcommand{\cl}[0]
%{\textnormalcl\,} }

\newcommand{\norm}[1]
{\left\| #1 \right\|}

\newcommand{\ra}[0]
{\rightarrow}

\newcommand{\ps}[2]
{\langle\,#1,#2\rangle}

\newcommand{\rt}[0]
{(t)}

\newcommand{\rs}[0]
{(s)}

\newcommand{\mineq}[1]
{\leq #1}

\newcommand{\mageq}[1]
{\geq #1}

\newcommand{\equazioneref}[2]
{\begin{equation}\label{#1} \begin{split}
#2
\end{split} \end{equation}}

\newcommand{\eee}[1]
{\begin{equation*} \begin{split}
#1
\end{split} \end{equation*}
}

\newcommand{\graffe}[1]
{\left\{ #1 \right\}}

\newcommand{\tonde}[1]
{\left ( #1 \right )}

\newcommand{\eps}[0]
{\varepsilon }

\newcommand{\ccal}[1]
{\mathcal{#1}}

\newcommand{\scr}[1]
{\mathscr{#1}}

\newcommand{\bb}[1]
{\mathbb{#1}}

\newcommand{\Oonn}[1]
{ \mathrm{#1} }

\newcommand{\Liminf}[0]
{\Oonn{Lim\,inf}}

\newcommand{\Limsup}[0]
{\Oonn{Lim\,sup}}

\newcommand{\halfline}[0]
{ [0,+\infty[ }

\newcommand{\dhaus}[0]
{ {d}_{\scr H} }

\journal{JMAA}

\begin{document}

\begin{frontmatter}

\title{Weak Epigraphical Solutions to Hamilton-Jacobi-Bellman Equations on Infinite Horizon}

\author{Vincenzo Basco}

\affiliation{organization={Thales Alenia Space},%Department and Organization
addressline={via Saccomuro 24},
city={Roma},
postcode={00131},
state={Italy},
country={ITA}}

\begin{abstract}
In this paper we show a uniqueness result for weak epigraphical solutions of Hamilton-Jacobi-Bellman (HJB) equations on infinite horizon for a class of lower semicontinuous functions vanishing at infinity. Weak epigraphical solutions of HJB equations, with time-measurable data and fiber-convex, turn out to be viscosity solutions -- in the classical sense -- whenever they are locally Lipschitz continuous. Here we extend the notion of locally absolutely continuous tubes to set-valued maps with continuous epigraph of locally bounded variations. This new notion fits with the lack of uniform lower bound of the Fenchel transform of the Hamiltonian with respect to the fiber. Controllability assumptions are considered.
\end{abstract}

\begin{keyword}
%% keywords here, in the form: keyword \sep keyword
Weak solutions; Hamilton-Jacobi-Bellman equations; Locally bounded variations set-valued maps; Representation results.
%% PACS codes here, in the form: \PACS code \sep code

\
%% MSC codes here, in the form: \MSC code \sep code
%% or \MSC[2008] code \sep code (2000 is the default)

MSC: 70H20 · 49L25 · 49J24.
\end{keyword}

\end{frontmatter}

\section{Introduction}

Since the 80s the investigation of existence and uniqueness of weak solutions to first-order partial differential equations on finite/infinite horizon
%\[F(t,x,-\partial_t u(t,x),-\nabla u(t,x))=0\quad (t,x)\in \mathbb ]0,+\infty[\times \scr O\]
%where $\scr O$ is a non-empty open subset of $\bb R^n$,
was carried out by the meaning of viscosity solutions in the pioneer works of Crandall, Evans, Barles and Lions (\cite{barles1984existenceresults,crandalllionsevans1984someproperties,crandalllions1983viscosity}). Such weak solutions --  also called \textit{viscosity solutions} -- known in the context of control theory, calculus of variations, mean field games, etc... focus on the use of super-sub/solutions. Advances have been made in the direction of HJB equations with autonomous Hamiltonian, by  Souganidis and Ishii  (\cite{ishii1985hbjediscontinuous,ishii1992perron,souganidis1985existence}). The investigation of weak solutions of HJB equations when the Hamiltonian is only time-measurable has become increasingly fundamental due to its applications in applied sciences as in macroeconomy and engineering, although the classical notion of weak solution is unsatisfactory and challenging to manage. In fact, the value function, which is a viscosity solution of the HJB equation, loses its differentiability property -- even in the absence of state constraints -- whenever there are several optimal solutions to the same initial datum or when state constraints are imposed. Avoiding the usage of ``test" functions, by using geometric arguments, the  definition of a weak solution can be equivalently stated in terms of ``normals'' to the epigraph (and hypograph) of the solution, (cfr. \cite{basco2019hamilton,frankowska1993lowersemicontinuous,frankowskaplaskrze1995measviabth} and the references therein), i.e., 
\begin{equation}\label{intro_eq_2}
F(t, x ,- \scr {T}_{{\rm graph}\;u} (t,x,u(t,x))^-)= \{0\}\quad \text{in } ]0,+\infty[\times \Omega
\end{equation}
where $\scr T_E^-(y)$ stands for negative polar of the Boulingad tangent cone of a set $E$ at $y$ and $\Omega\subset \bb R^n$ is an open subset (cfr. Section 2 and Definition \ref{def_epi_sol} below). We point out that, when the dynamics is time-measurable, this equivalence may not be true.

Nevertheless, the study of the uniqueness of weak solutions can be conducted using non-smooth analysis techniques as in the definition reported in [6]. To deal with Hamiltonians measurable in time, Ishii [7] proposed a new notion of weak solution in the class of continuous functions studying the existence and uniqueness of viscosity solutions of HJB equations for the stationary-evolutionary case in finite horizon and on infinite horizon with free state constraints. In the case of Bellman equations, associated with optimal control problems, it is recognized that the viscosity solutions are represented as the corresponding value function. In the general case, the viscosity solutions of the evolutionary equations of HJB with the fiber-convex Hamiltonian, on infinite horizon with state constraints, are represented as the value function of a suitable optimal control problem. In finite horizon, this point of view has been studied by Ishii [8] for the convex case providing a  H\"older continuous representation, and in [9] for Hamiltonians not necessarily convex, but the Lagrangian is simply continuous and the control space is infinite dimensional. On the other hand, in [10] the author constructs a faithful representation, Lipschitz continuous in the state and control. Frankowska et al. [11] studied faithful representations of Hamiltonians measurable in time and their stability, giving clear results on the Lipschitz constants of these representations. In the recent work \cite{basco2020representation}, under weaker hypotheses and assuming the boundedness from above of the Fenchel transform - with respect to the fiber - of the Hamiltonian on its domain, the author has extended the previous representation result, constructing a faithful epigraphical representation (see the references therein \cite{basco2020representation} for further discussions).

To deal with the study of weak solutions in terms of their characterization with the value function of a particular optimal control problem, conditions on the Fenchel transform in the fiber of the Hamiltonian are assumed in the previous works. The uniform lower bound assumption on the Fenchel transform  of the Hamiltonian with respect to the state and the fiber, allows the use of the known viability theorems for tubes investigated by Frankowska et al. in \cite{frankowska1996measurable,frankowskaplaskrze1995measviabth}. Indeed, in this setting  (cfr. also \cite{basco2020representation}), the epigraph of the value function is locally absolutely continuous. However, the assumption of boundedness from below of the Fenchel transform is a not satisfactory condition for many applications, as in mechanics (cfr. \cite{basco2021exploiting}). Indeed, in such a context, the epigraph of the value function of Bellman problems on infinite horizon is at least merely continuous, for Lipschitz problem data.  Therefore, it is needed to extends the notion of locally absolute continuity property of the epigraph, and so further investigations on viability results have to be considered. For this purpose, we give a viability result for continuous tubes of locally bounded variations (see Section 3 and Definition \ref{def_lbv_svm} below).

The central result of this paper is a uniqueness theorem, under controllability assumptions, for weak epigraphical solutions vanishing at infinity (see Theorem \ref{main_theo} below) of HJB equations \rif{intro_eq_2}. By a representation result in \cite{basco2020representation}, each weak epigraphical solution turn out to be an upper and lower weak solution of the value function associated with such representation. Skipping the uniform bound from below of the Fenchel transform, the weak epigraphical solutions are considered merely lower semicontinuous with continuous epigraph of locally bounded variations. 

The outline of this paper is as follows. In Section 2 notations and some known results on non-smooth analysis are collected. In Section 3 we give preliminary definitions with the statement of the main result of this paper. The Section 4 is devoted to the proofs.

\section{Preliminaries and Notations}

 $\bb N$, $|\,.\,|$, and $\ps{.}{.}$ stands for the set of natural numbers, the Euclidean norm, and the scalar product, respectively. Let $E\subset \bb R^k$ be a subset and $x\in \bb R^k$. The closed ball in $\bb R^k$ of radius $r>0$ and centered at $x$ is denoted by $B^k(x,r)$ ($\bb B^k:=B^k(0,1)$ and $\bb S^{k-1}:=\ttnn{bdr } B^k(0,1)$). $\ttnn{cl }E$, $\ttnn{int }E$, $\ttnn{bdr }E$, $E^c$, and $\ttnn{co }E$ stands, respectively, for the closure, the interior, the boundary, the complement, and the convex hull of $E$. The set $E^-=\{p\in \bb R^k\,|\, \ps{p}{e}\mineq 0\, \ttnn{for all } e\in E\}$ is the negative polar of $E$. $\mu$ denotes the Lebesgue measure.

Consider a closed subset $I\subset \bb R$, $C\subset \bb R^k$ non-empty, and $a<b$. We take the following notation:
\enualp{
\item[-] $\ccal L^1(I;C)=\{u:I\ra C \ttnn{ Lebesgue integrable}\}$.
\item[-] $\ccal L^1_{{loc}}(I;C)=\{u:I\ra C\,|\, u\in \ccal L^1(J;C)\; \forall J\subset I \ttnn{ compact}  \}$.
% \item[-] $\ccal \ccal W^{1,1}_{loc}(I;A)$ is the normed space of all $A$-valued locally absolutely continuous functions on $\ttnn{cl }I$;
\item[-] $ \ccal L_{{loc}}=\{u\in \ccal L^1_{{loc}}([0,+\infty[;[0,+\infty[) \; |\; \lim_{\eps\ra 0}\, \displaystyle{\sup_{\substack{J\subset [0,+\infty[\\ \mu(J)\mineq \eps}}} \int_{J} {u(s)}\,ds =0\}$ .
\item[-] $\ccal W^{1,1}([a,b];C)=\{u:[a,b]\ra C \ttnn{ absolutely continuous} \}$ endowed with the norm $\norm{u}_{\ccal W^{1,1},[a,b]}:=|u(a)|+\int_a^b|u'(s)|ds$.
\item[-]   $\ccal W^{1,1}_{loc}([a,+\infty[;C)=\{u:[a,+\infty[\ra C\;|\;u\in \ccal W^{1,1}([a,t];C) \;\forall  t\mageq a\}$.
}

Let $A,B\subset \bb R^k$ be two non-empty sets and $x,y\in \bb R^k$. We define the following Euclidean {dist}ances: $d(x,y):=|x-y|$, $\text{dist}(x,B):=\inf\{d(x,b)|b\in B\}$, and $\text{dist}(A,B):=\inf\{d(a,b)|a\in A,b\in B\}$. We use the same notation for the {dist}ance of a point from a set and between two sets -- it does not generate confusion in the context where it is used. We define the so-called \textit{excess} of $A$ beyond $B$ by
$$
exc(A| B):=\sup \{{\text{dist}}(a, B)| {a \in A} \}\in [0,+\infty[\cup \graffe{{+\infty}}.
$$
We recall the following
$
exc(A| B)=\inf \left\{\varepsilon>0 :|A \subset B+\eps\bb B\right\}
$
where $\inf \emptyset=+\infty$, by convention. Moreoveer, the \textit{Pompeiu–Hausdorff \text{dist}ance between A and B} is defined by
\[\dhaus(A,B):=exc(A|B)\vee exc(B|A)\in [0,+\infty[\cup \graffe{{+\infty}}.\]

Let $X,Y$ be two normed spaces and $D\subset X$ be a non-empty closed set. Continuity properties of a set-valued mapping $S: D\subset X \rightsquigarrow  Y$ can be defined on the basis of Painlevé-Kuratowski set convergence. For any $\bar x\in D$ we define, respectively, the upper an lower limit of $S\left( x\right)$ when $ x \rightarrow \bar x$ by
\eee{
\Oonn{Lim\,sup}_{ x \rightarrow \bar x} S\left( x\right)&:= \{y \in Y \mid \liminf _{ x \rightarrow_D \bar x} d(y, S\left( x\right))=0 \}\\
\Oonn{Lim\,inf}_{ x \rightarrow \bar x} S\left( x\right)&:=\{y \in Y \mid \limsup_{ x \rightarrow_D \bar x} d(y, S\left( x\right))=0 \}
}
We say that $S$ is \textit{outer semicontinuous} (osc) at $\bar{x}$ when
$
\Limsup _{x \rightarrow \bar{x}} S(x) \subset S(\bar{x})
$
and \textit{inner semicontinuous} (isc) at $\bar{x}$ when
$
\Liminf _{x \rightarrow \bar{x}} S(x) \supset S(\bar{x}) .
$
It is called Painlevé-Kuratowski \textit{continuous at $\bar{x}\in D$} when it is both osc and isc at $\bar{x}\in D$.
%, as expressed by
%$
%\lim _{x \rightarrow \bar{x}} S(x)=S(\bar{x}).
%$
%These above terms are stated relative to a subset $D\subset X$ when the properties hold for limits taken with respect to $y \rightarrow \bar{y}$ in $D$.
Continuity is taken to refer to Painlevé-Kuratowski continuity, unless otherwise specified. We say that $S$ is continuous if it \textit{continuous} at $\bar x$ for any $\bar x\in D$.

Consider a non-empty subset $E\subset \bb R^k$ and $x\in \ttnn{cl }{E}$. The \textit{Boulingad tangent cone} (or \textit{contingent cone})  and the \textit{Clarke tangent cone} to $E$ at $x$ are defined, respectively, by
\eee{
\scr T_E(x) &:=\Limsup_{t\ra0+}\; t^{-1}(E-x)\\
\scr T_E^C(x) &:=\Liminf_{t\ra0+,\,y\ra_E x}\; t^{-1}(E-y).
}
The \textit{limiting normal cone} and the \textit{regular (or Clarke) normal cone} to $E$ at $x$ are defined, respectively, by
\eee{
%\scr T_E(x)&:=\{ v\in \bb R^k| \liminf_{h\ra 0+} \dfrac{d(x+hv,E)}{h}=0 \}\\
\scr N_E(x) &:=\Limsup_{y\ra_E x}{\scr T_E(y)}^-\\ 
\scr N_E^C(x) &:=\scr T_E^C(x)^-.
}
It is known that $\scr T_E^C(x)=\scr N_E(x)^-\subset \scr T_E(x)$ and $\scr T_E^C(x)^- =\ttnn{cl co }\scr N_E(x)$   whenever $E$ is closed (\cite[Chapter 6]{rockafellar2009variational}). 

We denote for any $\eta, r\mageq 0$ the sets
\eee{
N_E(x;\eta)&:=\{ n\in \bb S^{k-1}\,|\, n\in {{\ttnn{cl co}}}\,\scr N_E(y),\,y\in (\ttnn{bdr } E)\cap B^k(x,\eta) \}\\
\Sigma_E(x;\eta,r)&:=\{p\in \bb R^k  \,|\, \forall  n\in N_E(x;\eta),\, \ps{p}{n}\mageq r \}\\
\Gamma_E(x;\eta)&:=\{p\in \bb R^k  \,|\, \exists n\in N_E(x;\eta),\, \ps{p}{n}\mineq 0 \}.
}

Let $\varphi:\bb R^k\ra \bb R\cup \{\pm \infty\}$ be an extended real function. We write $\ttnn{dom }\varphi$, $\ttnn{epi }\varphi$, $\ttnn{hypo }\varphi$, and $\ttnn{graph }\varphi$ for the \textit{domain}, the \textit{epigraph}, the \textit{hypograph}, and the \textit{graph} of $\varphi$, respectively. The Fenchel \textit{transform} (or \textit{conjugate}) of $\varphi$, written $\varphi^*$, is the extended real function $\varphi^*:\bb R^k\ra \bb R\cup \{\pm\infty\}$ defined by
\[\varphi^*(v):=\sup_{p\in \bb R^k}\{\ps{v}{p}-\varphi(p)\}.\]

\section{The Main Result}

Consider a closed non-empty subset $\Omega\subset \bb R^n$.
%a function $F:[0,+\infty[\times \bb R^n\times \bb R^{n+2}\ra \bb R$,
%\[F=F(t,x,\Phi) \]
%and the {HJB equation} in \rif{intro_eq_2}.
%\begin{equation}
%F(t, x , \scr {T}_{{\rm graph}\;u} (t,x,u(t,x))^-)= \{0\}\quad \text{in } (0,+\infty)\times \Omega.
%\end{equation}
 We focus our analysis on HJB equation \rif{intro_eq_2} where
\equazioneref{def_F}{
&F(t,x,r,p,q)= r+H(t,x,p,q),\quad r\in \bb R\\
%& (t,x,r,p,q)\in [0,+\infty[\times \bb R^n\times \bb R\times \bb R^n\times \bb R,
}
and
\equazioneref{def_H}{
H:[0,+\infty[\times \bb R^n\times \bb R^n\times \bb R\ra \bb R
}
is a given Hamiltonian. For any $(t,x,q)\in \halfline\times \bb R^n\times \bb R$, we denote by $H^*(t,x,.,q):\bb R^n\ra \bb R\cup \{\pm \infty\}$ the Fenchel transform of $H(t,x,.,q)$.

\begin{definition}[Weak Epigraphical Solution]\label{def_epi_sol}\rm We say that a lower semicontinuous function $u:\halfline \times \Omega\ra \bb R \cup \{\pm \infty\}$ is an \textit{epigraphical weak solution} of the HJB equation \rif{intro_eq_2} if it satisfies:
\enualp{
\item[] for a.e. $t\mageq 0$ and all $x\in \ttnn{bdr } \Omega$ such that $(t,x)\in \text{dom }u$

$F(t, x ,-\varphi )\mageq 0$ for all $\varphi\in \scr T_{{\rm epi }\; u}(t,x,u(t,x))^- $
}
and
\enualp{

\item[] for a.e. $t\mageq 0$ and all $x\in \text{int } \Omega$ such that $(t,x)\in \text{dom }u$

$F(t, x ,-\varphi)= 0$ for all $\varphi\in \scr T_{{\rm epi }\; u}(t,x,u(t,x))^-$.
}
\end{definition}

\begin{definition}[Set-Valued Maps of Locally Bounded Variations]\label{def_lbv_svm}\rm
Consider a closed interval $I\subset \bb R$. We say that a set-valued map $\Phi:I\rightsquigarrow \bb R^k$ is of \textit{locally bounded variations (LBV)} if it satisfies the following:
\enualp{
\item[-] $\Phi$ takes non-empty closed images;
\item[-] for any $[a,b]\subset I$,
\[
\sup \;\sum_{i=1}^{m-1} exc(\Phi(t_{i+1})\cap \scr K| {\Phi(t_i)})\vee exc(\Phi(t_i)\cap \scr K|{\Phi(t_{i+1})})<+\infty\]
where the supremum is taken over all compact subset $\scr K\subset \bb R^k$ and all finite partition $a=t_1< t_2< ...< t_{m-1}<t_m= b$.
}
\end{definition}

By $\ccal Epi_{loc}([0,+\infty[\times\Omega)$ we denote the family of all lower semicontinuous functions $u:[0,+\infty[\times \Omega\ra \bb R\cup\{\pm \infty\}$ such that
\eee{
(t\mapsto \textnormal{epi }u(t,.)) \ttnn{ is continuous and of LBV on }[0,+\infty[.
}

The following assumptions are considered on $H$:
\enualp{
\item[\textbf{H.1}] for any $t\mageq 0$, $x,p\in \bb R^n$, and $q>0$:
\begin{alignat*}{3}
&(a)\quad H(.,x,p,q) &&\ttnn{ is Lebesgue measurable},\\
&(b)\quad H(t,x,.,q) &&\ttnn{ is convex},\\
&(c)\quad H(t,x,.,.) &&\ttnn{ is positively homogeneus}.
\end{alignat*}

% \item[\textbf{A.1}] $H(t,x,.,q)$ is convex for any $t\in I$, $x\in \bb R^n$, and $q>0$;
\item[\textbf{H.2}] there exist $\sigma_X,\, \sigma_P,\,\hat\sigma :\halfline\ra \halfline$ measurables, with $\sigma_X\in \ccal L_{\ttnn{loc}}$ and $\hat\sigma$ locally bounded, such that for any $q>0$:
\begin{alignat*}{4}
&\ttnn{for all $t\mageq 0$ and $x_i,p_i,x,p\in \bb R^n$}\\
&\quad (a)\quad |H(t,x_1,p,q)-H(t,x_2,p,q)|\mineq \sigma_X(t)(1+|p|)|x_1-x_2|\\
&\quad(b)\quad|H(t,x,p_1, q)-H(t,x, p_2, q)|\mineq \sigma_P(t)(1+|x|)|p_1- p_2|,\\
&\ttnn{and for a.e. $t\mageq 0$,  all $y\in \bb R^n$, $x\in \ttnn{bdr }\Omega$,   and $p\in \ttnn{dom } H^*(t,x,.,q)$}\\
&\quad(c)\quad{H^*(t,y,p,q)} \mineq \hat \sigma\rt(1+|y|).
\end{alignat*}
}
  
\noindent The next controllability assumptions are also taken into account:

\enualp{
%\item[\textbf{A.5}] there exists $\sigma\in \ccal L_{\ttnn{loc}}$ that is locally bounded and such that
%\[
%|H(t,x_1,p,q)-H(t,x_2,p,q)|\mineq \sigma(t)(1+|p|)|x_1-x_2|
%\]
%for all $t\in I$, $x_1,x_2,p,\in \bb R^n$, and $q>0$;
\item[\textnormal{\textbf{C.1}}] there exists $\sigma_{bdr}\in \ccal L_{\ttnn{loc}}$ such that
\begin{alignat*}{3}
%&(a)& \qquad&{H^*(t,y,p,q)} \mineq \hat \sigma\rt(1+|y|)\\
\modulo{p}+\modulo{H^*(t,x,p,1)} \mineq \sigma_{bdr}\rt
\end{alignat*}
for a.e. $t\mageq 0$, for all $y\in \bb R^n$, $x\in \ttnn{bdr }\Omega$,   and $p\in \ttnn{dom } H^*(t,x,.,1)$;

\item[\textnormal{\textbf{C.1}}] there exist $ \,\eta>0, \,r>0,\,M\mageq0$ such that\\
$\ttnn{for a.e. } t>0, \forall y\in \ttnn{bdr }\Omega+ \eta \bb B^n,\, \forall p\in \ttnn{dom }H^*(t,y,.,1)\cap \Gamma_\Omega(y;\eta)$,\\ 
$\exists \,w\in\,\ttnn{dom }H^*(t,y,.,1) \cap B^n(p,M)\,:\, w,\,w-p \in \Sigma_\Omega(y;\eta,r)$.
%\[\inf \{ \ps{n}{w},\,\ps{n}{w-p} \;|\; n\in   N^{\eta}_\Omega(y) \} \mageq r.\]
}

\begin{theorem}\label{main_theo}

Consinder the {HJB} equation \ttnn{\rif{intro_eq_2}-\rif{def_H}} with the general assumptions \textnormal{\textbf{H.1-2}}, \textnormal{\textbf{C.1-2}}.

Suppose that for every $(t,x)\in \halfline\times \Omega$

\enualp{
\item $\lim_{T\ra +\infty}\int_{t}^{T} H^*(s,\xi\rs,\xi'\rs,1)\, ds$ exists for any $\xi\in \ccal W^{1,1}_{loc}([t,+\infty[;\Omega)$ such that $\xi(t)=x$,

\item the infimum $\alpha(t,x)$ defined by
\eee{
\inf \; \Big\{\lim_{T\ra +\infty}\int_{t}^{T} H^*(s,\xi\rs,\xi'\rs,1) \,ds \;\Big|  \xi\in \ccal W^{1,1}_{loc}([t,+\infty[;\Omega),\xi(t)=x   \Big\}
}
%\begin{gather*}
%    \begin{aligned}
%       \inf \; \Big\{\lim_{t\ra +\infty}\int_{t}^{T} H^*(s,\xi\rs,\xi'\rs,1) \,ds \;\Big|  
%        \;\begin{aligned}
%            & \xi\in \ccal W^{1,1}_{loc}([t,+\infty[;\Omega),\xi(t)=x            
%        \end{aligned}
%          \Big\}
%    \end{aligned}
%\end{gather*}
is finite.
}
%such that
%\eee{
%(t\mapsto \textnormal{epi }u(t,.))&\in \ccal AC_{loc}([0,+\infty[;\bb R^{n+1})\\
%(t\mapsto \textnormal{hypo }v(t,.)\cap \textnormal{dom }v(t,.))&\in \ccal AC_{loc}([0,+\infty[;\bb R^{n+1}).
%}
Then there exists only one weak epigraphical solution $u\in \ccal Epi_{loc}([0,+\infty[;\bb R^{n+1})$ of the HJB equation with vanishing condition:
\equazioneref{vanishing_cond}{
F(t, x , - \scr {T}_{\rm{graph }\;u} (t,x,u(t,x))^-)&= \{0\}\quad \text{in } ]0,+\infty[\times \Omega\\
\lim_{t\ra +\infty}\; \sup_{x\in \Omega}{|u(t,x)|}&=0.
}
%\[\lim_{t\ra +\infty} \norm{u(t,.)}_{\Omega,\infty}=\lim_{t\ra +\infty} \norm{v(t,.)}_{\Omega,\infty}=0 \quad \Longrightarrow\quad u= v \quad \text{in } (0,+\infty)\times \Omega.\]
 
\end{theorem}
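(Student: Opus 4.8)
The plan is to identify the unique solution with the value function
\[
\alpha(t,x)=\inf\Big\{\lim_{T\to+\infty}\int_{t}^{T}H^*(s,\xi(s),\xi'(s),1)\,ds\ \Big|\ \xi\in\ccal W^{1,1}_{loc}([t,+\infty[;\Omega),\ \xi(t)=x\Big\},
\]
which is the natural candidate: by the faithful epigraphical representation of \cite{basco2020representation}, the data $F$ in \rif{def_F}--\rif{def_H} is precisely the one generated by the Lagrangian $L(t,x,v):=H^*(t,x,v,1)$. I would split the argument into \emph{existence} (that $\alpha$ lies in $\ccal Epi_{loc}([0,+\infty[\times\Omega)$, solves \rif{intro_eq_2}, and meets the vanishing condition) and \emph{uniqueness} (that every such solution equals $\alpha$).

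For existence I would first use hypotheses (a)--(b) of the theorem, the Lipschitz bounds \textnormal{\textbf{H.2}}(a)--(b), and the upper bounds \textnormal{\textbf{H.2}}(c) and \textnormal{\textbf{C.1}} to show that $\alpha$ is finite and lower semicontinuous, and that the epigraphical tube $t\mapsto\text{epi}\,\alpha(t,\cdot)$ has nonempty closed images, is Painlev\'e--Kuratowski continuous, and is of LBV in the sense of Definition \ref{def_lbv_svm}; the point is that the excesses restricted to compacta are controlled by the \emph{local} bounds on $H^*$ (through $\hat\sigma$ and $\sigma_{bdr}$) rather than by absolute continuity, which is exactly why one works in $\ccal Epi_{loc}$ and not with locally absolutely continuous tubes. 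The vanishing condition $\sup_{x\in\Omega}|\alpha(t,x)|\to0$ then follows from (a)--(b) together with \textnormal{\textbf{H.2}}(c)/\textnormal{\textbf{C.1}}, which force the tails $\int_t^{+\infty}L$ to vanish uniformly in $x$. That $\alpha$ solves \rif{intro_eq_2} is the geometric form of dynamic programming: super--optimality (the map $s\mapsto\alpha(s,\xi(s))+\int_t^sL$ is nondecreasing along every admissible $\xi$) and sub--optimality (it is constant along an optimal one) translate, through the announced viability theorem for continuous LBV tubes applied to $\text{epi}\,\alpha$ and the augmented inclusion
\[
(\xi',z')\in G(s,\xi):=\big\{(v,-w)\ :\ w\ge H^*(s,\xi,v,1)\big\},
\]
into the contingent and regular--normal cone relations, i.e. into $F(t,x,-\varphi)=0$ on $\text{int}\,\Omega$. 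On $\text{bdr}\,\Omega$ only the one--sided inequality $F(t,x,-\varphi)\ge0$ survives, and it is the controllability assumption \textnormal{\textbf{C.2}} (through $\Sigma_\Omega$ and $\Gamma_\Omega$) that keeps the near--optimal trajectories feasible and delivers it; convexity \textnormal{\textbf{H.1}}(b) and positive homogeneity \textnormal{\textbf{H.1}}(c) underlie the passage between the Lagrangian and the normal--cone forms of $F$.

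For uniqueness, let $u\in\ccal Epi_{loc}([0,+\infty[\times\Omega)$ be any weak epigraphical solution satisfying \rif{vanishing_cond}. By the representation result of \cite{basco2020representation}, $u$ is simultaneously an upper and a lower weak solution of the value function $\alpha$. The upper--solution property yields, for \emph{every} admissible $\xi$ with $\xi(t)=x$, that $u(t,x)\le\int_t^TH^*(s,\xi,\xi',1)\,ds+u(T,\xi(T))$ for all $T\ge t$; taking the infimum over $\xi$ and letting $T\to+\infty$, the endpoint term vanishes by \rif{vanishing_cond} and we obtain $u(t,x)\le\alpha(t,x)$. The lower--solution property, through the same LBV viability theorem, produces for each $(t,x)\in\text{dom}\,u$ \emph{one} admissible $\xi$ along which the reverse inequality $u(t,x)\ge\int_t^TH^*(s,\xi,\xi',1)\,ds+u(T,\xi(T))$ holds; letting $T\to+\infty$ gives $u(t,x)\ge\alpha(t,x)$. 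Hence $u\equiv\alpha$, which is the asserted uniqueness.

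The crux is the LBV regime underlying both existence and uniqueness. Without a uniform lower bound on $H^*$ the epigraph need not be locally absolutely continuous, so the classical measurable viability theorems of \cite{frankowska1996measurable,frankowskaplaskrze1995measviabth} do not apply, and the entire bridge between the tangent/normal--cone conditions of Definition \ref{def_epi_sol} and the trajectory (viability/invariance) statements must be routed through the new continuous LBV viability result of Section~3, with the boundary behaviour on $\text{bdr}\,\Omega$ governed by \textnormal{\textbf{C.1}}--\textnormal{\textbf{C.2}} rather than by interior regularity. The subtle analytic point is making the two limits $T\to+\infty$ rigorous --- controlling the tails of $\int H^*$ and the endpoint values $u(T,\xi(T))$ uniformly in $x$ --- and this is exactly where hypotheses (a)--(b) and the vanishing condition \rif{vanishing_cond} are indispensable.
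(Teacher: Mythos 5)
Your uniqueness skeleton is essentially the paper's: the paper takes two weak epigraphical solutions $v,w$ satisfying \rif{vanishing_cond} and sandwiches them as $v\le \scr V_{f,\mathfrak L}\le w$ around the value function of the representation of Proposition \ref{theo_rep_H}, with the estimate $w\ge\scr V_{f,\mathfrak L}$ coming from a single viable trajectory in $\ttnn{epi}\,w$ produced by the new LBV viability theorem (Corollary \ref{corollario_viabilita}) and the estimate $v\le\scr V_{f,\mathfrak L}$ coming from monotonicity along near-optimal trajectories. But your proposal has a genuine gap in the direction $u\le\alpha$: you assert that the upper-solution property yields $u(t,x)\le\int_t^T H^*(s,\xi,\xi',1)\,ds+u(T,\xi(T))$ for \emph{every} admissible $\xi$. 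This is not available, because Definition \ref{def_epi_sol} gives only the one-sided inequality $F\ge 0$ on $\ttnn{bdr}\,\Omega$, which is the wrong-sided one for this estimate; the monotonicity can only be extracted along trajectories confined to $\ttnn{int}\,\Omega$ (Lemma \ref{lemma_parte_interna_2}). The paper closes this by the Neighboring Feasible Trajectory theorem (Proposition \ref{teo_neig} in the Appendix, resting on \textbf{C.1}--\textbf{C.2}): a near-optimal trajectory for $\scr V_{f,\mathfrak L}$ is approximated by trajectories lying in $\ttnn{int}\,\Omega$ on $[t_0,s_i[$ with matching endpoint, and one then passes to a double limit using the lower semicontinuity of $v$ and convergence of the costs. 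Your write-up invokes the controllability assumptions only in the existence discussion and never deploys an NFT-type argument in the uniqueness proof, so this direction is unproved as stated. Relatedly, the augmented inclusion $G(s,\xi)=\{(v,-w): w\ge H^*(s,\xi,v,1)\}$ you feed to the viability theorem has unbounded values; the paper must truncate it to $\hat\Phi(t,x)$ via \textbf{H.2}-(c), taking $r\in[0,\hat\sigma(t)(1+|x|)-\scr L(t,x,\theta)]$, in order to satisfy hypothesis (ii)-(a) of Proposition \ref{prop_viability}.

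A second gap is the existence half: that $\alpha$ itself belongs to $\ccal Epi_{loc}$, is a weak epigraphical solution, and satisfies the vanishing condition is announced but not argued --- no mechanism is given for the LBV and continuity properties of $t\mapsto\ttnn{epi}\,\alpha(t,\cdot)$ in the sense of Definition \ref{def_lbv_svm}, nor for the uniform-in-$x$ decay of the tails of $\int H^*$. Note that the paper's own proof does not establish existence either; it shows only that any two solutions in the class coincide with $\scr V_{f,\mathfrak L}$. If you intend to supply existence as well, you are taking on a substantial additional burden that your sketch does not discharge.
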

\begin{remark}\rm\
\enualp{
\item In the the previous result no regularity assumptions are made w.r.t. the $q$-dependence on the Hamiltonian $H$, beyond those which are made in \textbf{H.1}-(a),(b). Moreover, condition \textbf{H.1}-(c) does not ensures non-negative values for $H^*(t,x,.,q)$. 

\item We underline that, from assumption \textbf{H.2}-(c), for all $t\in I$, $x \in \bb {R}^{n}$, $q>0$, and $\bar q\in \ttnn{cl }\ttnn{dom } H^*(t,x,.,q)$ there exists $ \eps>0$ such that
\[\sup \;\{H^*(t,x,p,q)| p\in \ttnn{dom } H^*(t,x,.,q)\cap B^n(\bar q,\eps)\}<+\infty.\]
}
\end{remark}

\section{Proofs}

In this section we provide the proofs of the results of this paper. We recall that, in the literature, a map $E:[0,+\infty[\rightsquigarrow  \bb{R}^{d}$ is also called \textit{tube}.

\subsection{Viability for Continuous Tubes of Locally Bounded Variations}

The following result is well known (cfr. \cite[Chapter 3]{dontchev2009implicit}).

\begin{lemma}[Characterization of Continuity of Set-Valued Maps, \cite{dontchev2009implicit}]\label{lemma_cont} Let $D\subset \bb R^n$ be a closed non-empty set. Consider a set-valued map $S: D\subset \bb R^n \rightsquigarrow  \bb R^m$ and let $\bar{x} \in D\cap \Oonn{dom } S$.

Then $S$ is continuous at $\bar{x}$ if and only if the function ${\text{dist}} (x, S(.))$ is continuous at $\bar{x}$ in $D$ for every $x \in \bb R^m$.

\end{lemma}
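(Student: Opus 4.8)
The plan is to decompose the biconditional along the two halves of Painlevé--Kuratowski continuity. Since $S$ being continuous at $\bar x$ means it is simultaneously osc and isc there, and the scalar function $v\mapsto \text{dist}(v,S(\cdot))$ being continuous at $\bar x$ means it is simultaneously lower and upper semicontinuous there, I would prove two paired equivalences separately, for every fixed $v\in\bb R^m$: (i) $S$ is osc at $\bar x$ iff $\text{dist}(v,S(\cdot))$ is lsc at $\bar x$ in $D$; and (ii) $S$ is isc at $\bar x$ iff $\text{dist}(v,S(\cdot))$ is usc at $\bar x$ in $D$. Intersecting (i) and (ii) over all $v$ yields the stated characterization. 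Throughout I would use the reformulation afforded by the preliminary definitions, namely that $z\in\Limsup_{x\to\bar x}S(x)$ exactly when $\liminf_{x\to_D\bar x}\text{dist}(z,S(x))=0$, and $z\in\Liminf_{x\to\bar x}S(x)$ exactly when $\limsup_{x\to_D\bar x}\text{dist}(z,S(x))=0$.

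For (i), in the forward direction I would fix $v$ and argue by contradiction that $\liminf_{x\to_D\bar x}\text{dist}(v,S(x))\geq\text{dist}(v,S(\bar x))$. If the liminf were strictly smaller, a sequence $x_k\to_D\bar x$ with $\text{dist}(v,S(x_k))$ converging to a value below $\text{dist}(v,S(\bar x))$ would supply near-projections $z_k\in S(x_k)$ that remain bounded; extracting a convergent subsequence $z_k\to z$ places $z\in\Limsup_{x\to\bar x}S(x)\subset S(\bar x)$ while $\text{dist}(v,z)<\text{dist}(v,S(\bar x))$, a contradiction. The reverse direction is immediate: if $z\in\Limsup_{x\to\bar x}S(x)$ then $\liminf_{x\to_D\bar x}\text{dist}(z,S(x))=0$, so lsc forces $\text{dist}(z,S(\bar x))\leq 0$, and closedness of $S(\bar x)$ gives $z\in S(\bar x)$, i.e.\ osc.

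For (ii), the forward direction rests on the triangle inequality $\text{dist}(v,S(x))\leq\text{dist}(v,z)+\text{dist}(z,S(x))$, valid for every $z\in S(\bar x)$. Taking $z$ to be a projection of $v$ onto the closed nonempty set $S(\bar x)$ and using $z\in S(\bar x)\subset\Liminf_{x\to\bar x}S(x)$ (so that $\limsup_{x\to_D\bar x}\text{dist}(z,S(x))=0$) yields $\limsup_{x\to_D\bar x}\text{dist}(v,S(x))\leq\text{dist}(v,z)=\text{dist}(v,S(\bar x))$, which is usc. Conversely, for any $z\in S(\bar x)$, usc of $\text{dist}(z,S(\cdot))$ gives $\limsup_{x\to_D\bar x}\text{dist}(z,S(x))\leq\text{dist}(z,S(\bar x))=0$, so $z\in\Liminf_{x\to\bar x}S(x)$, proving $S(\bar x)\subset\Liminf_{x\to\bar x}S(x)$, i.e.\ isc.

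The points needing care, rather than genuine obstacles, are the convention $\text{dist}(\cdot,\emptyset)=+\infty$ together with the role of the domain: since $\bar x\in\Oonn{dom}S$ the image $S(\bar x)$ is nonempty and closed, so projections exist and vanishing distance yields membership, while any sequence realizing a finite liminf in (i) must eventually lie in $\Oonn{dom}S$, so nearby points where $S$ is empty do not affect the argument. I expect the boundedness-and-extraction step in (i) to be the one demanding the most attention, as it is precisely where closedness of the images and compactness in $\bb R^m$ are jointly exploited.
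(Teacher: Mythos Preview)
The paper does not supply its own proof of this lemma; it is stated as well known with a citation to Dontchev--Rockafellar. Your decomposition into the paired equivalences osc\,$\Leftrightarrow$\,lsc and isc\,$\Leftrightarrow$\,usc is the standard route, and each of the four implications you sketch is correct. The one technical caveat you rightly flag---that $\text{dist}(z,S(\bar x))=0$ only yields $z\in S(\bar x)$ when $S(\bar x)$ is closed---is harmless in the paper's applications because the tubes there take closed values by definition; note also that the projection step in (ii) can be replaced by an $\varepsilon$-approximate minimizer $z_\varepsilon\in S(\bar x)$, so closedness of $S(\bar x)$ is only genuinely needed in the reverse implication of (i).
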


Lemma below provide a generalization of Lemma 4.8 in \cite{frankowskaplaskrze1995measviabth} to continuous tubes of LBV.

\begin{lemma}\label{lemma1base}

Assume that $E:[0,+\infty[\rightsquigarrow  \bb{R}^{d}$ is continuous of locally bounded variations and the set-valued map $Y:[0,+\infty[\rightsquigarrow  \bb R^d$ satisfies
\enualp{
\item $ Y(.)$ is measurable for every $x \in \bb {R}^{d}$ with non-empty compact images;

\item there exists $\rho\in \ccal L^1_{{loc}}([0,+\infty[;[0,+\infty[)$ such that $d_{\scr H}{(Y\rt,Y\rs)} \mineq \int_s^t \rho(h)dh$ for all $0\mineq s\mineq t$.
}
%\[x \rightarrow F(t, x) \quad is continuous for almost all t \in[0, T] ;\]

Let $\Psi$ be defined by
\[t \mapsto \Psi(t):={\textnormal{dist}}\left(E(t), Y(t)\right).\]
%where
%\[R\left(t, t_{0}\right)\left(x_{0}\right):=\left\{x(t) \mid x \in S_{\left[t_{0}, T\right]}\left(x_{0}\right)\right\}\]
%and $S_{\left[t_{0}, T\right]}\left(x_{0}\right)$ is the set of absolutely continuous solutions of
%\[x^{\prime}(t) \in F(t, x(t)) \quad \ttnn{ a.e. }\left[t_{0}, T\right]\]
%and satisfying the initial condition $x\left(t_{0}\right)=x_{0}$.

Then $\Psi$ is continuous and of locally bounded variations on $[0,+\infty[$.

\end{lemma}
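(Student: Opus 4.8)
The plan is to prove both assertions at once from a single two-sided estimate, after a localization-plus-truncation step that brings the set-to-set distance into exactly the form controlled by Definition \ref{def_lbv_svm}.

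First I would localize, since continuity and local bounded variation are properties on compact intervals. Fix $[a,b]\subset[0,+\infty[$. On it assumption (b) gives $\dhaus(Y(t),Y(a))\le\int_a^b\rho<+\infty$, so all the compact images $Y(t)$, $t\in[a,b]$, lie in one ball $B^d(0,R)$. Next I bound $\Psi$ on $[a,b]$: picking $e_0\in E(a)$, the map $t\mapsto\text{dist}(e_0,E(t))$ is continuous on $[a,b]$ by Lemma \ref{lemma_cont} (as $E$ is continuous), hence bounded; together with the ball bound for $Y$ this yields $\sup_{[a,b]}\Psi\le M_0<+\infty$ through the triangle inequality. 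Consequently every point of $E(t)$ almost realizing $\Psi(t)$ lies within distance $M_0+1$ of $B^d(0,R)$, so with $\mathcal K:=B^d(0,R+M_0+1)$ I obtain the representation $\Psi(t)=\text{dist}(E(t)\cap\mathcal K,Y(t))$, valid for all $t\in[a,b]$ with one fixed compact $\mathcal K$. This truncation is the crucial preparatory step, because $E(\cdot)\cap\mathcal K$ is precisely the object the LBV condition controls.

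The heart of the argument is the inequality
\[
\Psi(t)\le\Psi(s)+exc(E(s)\cap\mathcal K\,|\,E(t))+exc(Y(s)\,|\,Y(t)),
\]
together with its $(s\leftrightarrow t)$ symmetric counterpart. I would prove it directly: choose $e'\in E(s)\cap\mathcal K$ and $y'\in Y(s)$ nearly realizing $\Psi(s)$ (via the truncated representation), approximate $e'$ by a point of $E(t)$ and $y'$ by a point of $Y(t)$, and bound $\Psi(t)=\text{dist}(E(t),Y(t))$ by the resulting sum. The delicate point — where the asymmetric form of Definition \ref{def_lbv_svm} is essential — is that the \emph{source} set must be truncated to $\mathcal K$ (to keep the excess finite and to match the definition) while the \emph{target} set must remain the full $E(t)$; truncating the target would enlarge the excess in the wrong direction and break the matching with the LBV variation.

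From this estimate both conclusions follow. For continuity I let $s\to t$: the $Y$-term tends to $0$ by (b), while $exc(E(s)\cap\mathcal K\,|\,E(t))\vee exc(E(t)\cap\mathcal K\,|\,E(s))\to0$ by continuity of $E$, through a compactness argument using outer semicontinuity for the first excess and inner semicontinuity for the second, the role of $\mathcal K$ being to confine the competing sequences to a compact set. For local bounded variation I sum the two-sided inequality over an arbitrary partition $a=t_1<\dots<t_m=b$: the $Y$-contributions are dominated by $\sum_i\dhaus(Y(t_i),Y(t_{i+1}))\le\int_a^b\rho$, and the $E$-contributions are exactly $\sum_i exc(E(t_{i+1})\cap\mathcal K\,|\,E(t_i))\vee exc(E(t_i)\cap\mathcal K\,|\,E(t_{i+1}))$, bounded by the finite LBV variation of $E$ on $[a,b]$ for the fixed $\mathcal K$. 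Hence $\text{Var}(\Psi;[a,b])<+\infty$. The main obstacle to watch is precisely the truncation/asymmetry bookkeeping, since a naive estimate via $\dhaus(E(t),E(s))$ would be useless here, that quantity being generally infinite.
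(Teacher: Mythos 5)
Your proposal is correct, and its core coincides with the paper's: the same localization ($Y(t)\subset B^d(0,R)$ on $[a,b]$ via hypothesis (b)), the same truncation of $E$ by one fixed compact set so that $\Psi(t)=\textnormal{dist}(E(t)\cap\mathcal K,Y(t))$, and the same two-sided excess estimate
\[
|\Psi(t)-\Psi(s)|\;\le\; exc(E(s)\cap\mathcal K\,|\,E(t))\vee exc(E(t)\cap\mathcal K\,|\,E(s))\;+\;\dhaus(Y(s),Y(t)),
\]
summed over partitions to get bounded variation. Where you genuinely diverge is the continuity half. The paper proves uniform continuity of $\Psi$ on $[a,b]$ by a separate argument: it first establishes joint continuity of $(t,x)\mapsto\textnormal{dist}(x,E(t))$, then picks near-minimizing points $y_\eps(\tau)\in Y(\tau)$ and a measurable selection of $Y$, and combines uniform continuity of the distance function on $[a,b]\times R\bb B^d$ with the equi-continuity of $Y$ coming from $\rho\in\ccal L^1_{loc}$. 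You instead extract continuity from the very same excess inequality, invoking the standard fact that Painlev\'e--Kuratowski continuity of $E$ at $t$ is equivalent to $exc(E(s)\cap\mathcal K\,|\,E(t))\vee exc(E(t)\cap\mathcal K\,|\,E(s))\to 0$ as $s\to t$ for every compact $\mathcal K$ (outer semicontinuity handles the first excess, inner semicontinuity plus compactness of $E(t)\cap\mathcal K$ the second). This is a legitimate and more economical route: one estimate yields both conclusions, and no measurable selection or uniform-continuity bookkeeping is needed. The trade-off is that the paper's selection-based argument produces an explicit modulus of uniform continuity on $[a,b]$ (the quantitative form \rif{unif_cont_Y}--\rif{ineq_4}), which is reused verbatim in Step 2 of the proof of Proposition \ref{prop_viability} and in Corollary \ref{corollario_viabilita}; your version would require noting that continuity on a compact interval upgrades to uniform continuity before those later applications. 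Your remark on the asymmetry of the truncation — source set intersected with $\mathcal K$, target set left whole, since $\dhaus(E(t),E(s))$ may be infinite — is exactly the point the paper's inclusion $E(s)\cap\mathcal K\subset E(t)+exc(E(s)\cap\mathcal K|E(t))\bb B^d$ encodes.
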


\begin{proof}
We first show that the function
\equazioneref{claim1}{
(t,x)\mapsto {\text{dist}}(x, E(t)) \ttnn{ is continuous in }[0,+\infty[\times \bb R^d.
}
Indeed, from the continuity of $E$ and Lemma \ref{lemma_cont} we have that the function $t\mapsto \text{dist}(0,E\rt)$ is continuous on $[0,+\infty[$. Furthermore, for any $x\in \bb R^n$, it follows that the map $t\mapsto \text{dist}(x,E\rt)$ is continuous on $[0,+\infty[$ because, by applying the same argoment above, $t\rightsquigarrow  E\rt-\graffe{x}$ is continuous as well. Hence, since for any $t,s\in [0,+\infty[$ and $x,y\in \bb R^d$ the triangular inequality yield
\eee{
|\text{dist}(x,E\rt)-\text{dist}(y,E\rs)|&\mineq |\text{dist}(x,E\rt)-\text{dist}(x,E\rs)|+|x-y|,
}
it follows \rif{claim1}.

Now, fix $[a,b]\subset [0,+\infty[$. Notice that, by the triangular inequality and our assumptions, for every $a\mineq s\mineq b$
\eee{
\dhaus(\{0\},Y\rs)&\mineq \dhaus(\{0\},Y(b))+\dhaus(Y\rs,Y(b))\\
&\mineq \dhaus(\{0\},Y(b))+\int_a^b\rho(h)dh=:r,
}
so we deduce that
$
Y\rs\subset r\bb B^d$ for all $s\in [a,b].
$
Using \rif{claim1} and the Weierstrass theorem, we put $R:=r+\max\{{\text{dist}}(x, E(t)): x\in r\bb B^d,t\in [a,b]\}$. Hence, it follows for any $s\in [a,b]$
\[\text{dist}(E\rs\cap R\bb B^d,Y\rs)=\text{dist}(E\rs,Y\rs).\]
Since for any $s,t\in [0,+\infty[$ and any compact $\scr K \subset \bb R^d$
\[
E\left(s\right) \cap \scr K \subset E\left(t\right)+exc\left(E\left(s\right)\cap \scr K| E\left(t\right)\right) \bb B^d,
%E\left(t\right) \cap \scr K \subset E\left(s\right)+exc\left(E\left(t\right)\cap \scr K, E\left(s\right)\right)\bb B,
\]
keeping $\scr K=R\bb B^d$, we get
\eee{
&\Oonn{\text{dist}}\left(Y\left(s\right), E\left(s\right)\right)\\
& \mineq \Oonn{\text{dist}}\left(Y\left(s\right), E\left(t\right)\right)+exc\left(E\left(s\right)\cap \scr K| E\left(t\right)\right) \vee exc\left(E\left(t\right)\cap \scr K| E\left(s\right)\right)
}
and
\eee{
&\Oonn{\text{dist}}\left(Y\left(s\right), E\left(t\right)\right)\\
& \mineq \Oonn{\text{dist}}\left(Y\left(s\right), E\left(s\right)\right)+exc\left(E\left(s\right)\cap \scr K| E\left(t\right)\right) \vee exc\left(E\left(t\right)\cap \scr K| E\left(s\right)\right) .
}
Thus, for every $s,t\in [0,+\infty[$
\eee{
&\left|\Oonn{\text{dist}}\left(Y\left(s\right), E\left(t\right)\right)-\Oonn{\text{dist}}\left(Y\left(s\right), E\left(s\right)\right)\right| \\
&\mineq exc\left(E\left(s\right)\cap \scr K| E\left(t\right)\right) \vee exc\left(E\left(t\right)\cap \scr K| E\left(s\right)\right).
}
Finally, there exists $M>0$, depending only on $[a,b]$, such that, for any partition $a=t_1< t_2< ...< t_{m-1}<t_m= b$,
\equazioneref{eq:1}{
&\sum_{i=1}^{m-1}\left|\Psi \left(t_{i+1}\right)-\Psi\left(t_{i}\right)\right| \\
&\leq \sum_{i=1}^{m-1}\left| {\text{dist}}\left(Y\left(t_{i+1}\right), E\left(t_{i+1}\right)\right)- {\text{dist}}\left(Y\left(t_{i+1}\right), E\left(t_{i}\right)\right)\right|\\
&\qquad +\sum_{i=1}^{m-1} d_{\scr H}\left(Y\left(t_{i+1}\right), Y\left(t_{i}\right)\right)\\
&\mineq \sum_{i=1}^{m-1} exc\left(E\left(t_{i+1}\right)\cap \scr K| E\left(t_i\right)\right) \vee exc\left(E\left(t_i\right)\cap \scr K| E\left(t_{i+1}\right)\right)\\
&\qquad +\int_a^b\rho(h)dh\\
&\mineq M.
}
Hence, the locally bounded variations property for real valued functions follows.

Next we show that $\Psi$ is uniformly continuous in $[a,b]$. Fix $\eps>0$. For any $\tau\in [a,b]$ consider $y_\eps(\tau)\in Y(\tau)$ such that
\[\text{dist}(Y(\tau),E(\tau))+\frac{\eps}{4} \mageq \text{dist} (y_\eps(\tau),E(\tau)).\]
Then, for any $s,t\in [a,b]$ and any $x_s\in Y\rs$
\equazioneref{ineq_2}{
&\Psi\rs-\Psi\rt \\
&\mineq \text{dist}(Y\rs,E\rs)-\text{dist}(y_\eps\rt,E\rs)+\frac{\eps}{4}\\
&\mineq \text{dist}(x\rs,E\rs)-\text{dist}(y_\eps\rt,E\rt)+\frac{\eps}{4}\\
&\mineq |x_s-y_\eps\rt|+|\text{dist}(y_\eps\rt,E\rs)-\text{dist}(y_\eps\rt,E\rt)|+\frac{\eps}{4}.
%&\mineq |x_s-y_\eps\rt|+|\text{dist}(y_\eps\rt,E\rs)-\text{dist}(y_\eps\rt,E\rt)|+|\text{dist}(y_\eps\rt,E\rs)-\text{dist}(y_\eps\rt,E\rt)|
}
We notice that, from assumption (b), there exists $\delta>0$, depending only on $[a,b]$, such that for every $s,t\in [a,b]$ with $|s-t|\mineq \delta$
\equazioneref{unif_cont_Y}{
\dhaus(Y\rs,Y\rt) \mineq \frac{\eps}{4}.
}
Furthermore, applying the triangle inequality and the Lipschitz continuity of the \text{dist}ance function, for any $s,t\in [a,b]$
\equazioneref{ineq_3}{
&|\text{dist}(y_\eps\rt,E\rs)-\text{dist}(y_\eps\rt,E\rt)|\\
&\mineq |\text{dist}(y_\eps\rs,E\rs)-\text{dist}(y_\eps\rt,E\rs)|\\
&\quad +|\text{dist}(y_\eps\rs,E\rs)-\text{dist}(y_\eps\rt,E\rt)|\\
&\mineq |y_\eps\rs-y_\eps\rt|+|\text{dist}(y_\eps\rs,E\rs)-\text{dist}(y_\eps\rt,E\rt)|.
}
We recall that, from \rif{claim1}, the map $(\tau,x)\mapsto \text{dist}(x,E(\tau))$ is uniformly continuous on $[a,b]\times R\bb B^d$, with $R>0$ depending on $[a,b]$ as above. Hence, replacing $\delta$ with a sufficiently small one and using \rif{unif_cont_Y}, for any $s,t\in [a,b]$ with $|s-t|\mineq \delta$ holds
\equazioneref{ineq_4}{
|\text{dist}(y_\eps\rs,E\rs)-\text{dist}(y_\eps\rt,E\rt)|\mineq \frac{\eps}{4}.
}
Moreover, from assumption (a) and applying the Measurable Selection Theorem (\cite[Theorem 8.1.3]{aubin2009set}), we can find a measurable selection
$x(\tau)\in Y(\tau)$ for all $ \tau\in[0,+\infty[.$ Thus, keeping $x_s=x\rs$ in \rif{ineq_2}, using \rif{unif_cont_Y}, \rif{ineq_3}, and \rif{ineq_4}, we conclude that
\eee{
&\Psi\rs-\Psi\rt\mineq \eps\quad \forall s,t\in [a,b]\ttnn{ such that }|s-t|\mineq \delta.
}
From the symmetry with respect to $s$ and $t$ in the previous inequality, the conclusion follows.
\end{proof}

In what follows, we denote by
\[
D^+\Psi(t):=\limsup_{h\ra0+} \frac{\Psi(t+h)-\Psi(t)}{h}\in \bb R\cup\{\pm \infty\}
\]
the right Dini derivative at $t\in \bb R$ of a real valued function $\Psi\ccd$. Before to state the main result of this section, we need the following

\begin{lemma} \label{lemma_gronw}
Let $\Psi :[\tau,T]\ra \bb R$ be a continuous function and $\alpha, \beta :[\tau,T]\ra \bb R$ be two locally bounded functions, with $\alpha(\cdot) \geq 0$, such that
$$
D^+\Psi(t) \leq \alpha(t) \Psi(t)+\beta(t) \quad \text { for all } t \in ] \tau, T[.
$$
Then, for every $t \in[\tau, T[$,
$$
\Psi(t) \leq \Psi(\tau) e^{\alpha (t-\tau)}+\int_{\tau}^{t} e^{\alpha(t-r)} \beta \,d r
$$
where $\alpha:=\sup_{s\in [\tau,T]} \alpha\rs$ and $\beta:=\sup_{s\in [\tau,T]} |\beta\rs|$.
\end{lemma}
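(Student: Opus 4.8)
The plan is to pass from the variable rates $\alpha(\cdot),\beta(\cdot)$ to their suprema and then to solve the resulting constant-coefficient inequality by an integrating-factor argument, reducing everything to the classical fact that a continuous function with nonpositive right Dini derivative is nonincreasing. First I would replace the data by constants: since $\beta(s)\le|\beta(s)|\le\beta$ and $\alpha(s)\le\alpha$ for all $s\in[\tau,T]$, and since in the situation where the lemma is invoked $\Psi$ is a distance (see Lemma~\ref{lemma1base}) and hence nonnegative, one has $\alpha(t)\Psi(t)\le\alpha\Psi(t)$, so the hypothesis yields the constant-coefficient inequality $D^+\Psi(t)\le\alpha\Psi(t)+\beta$ for all $t\in\,]\tau,T[$. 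This reduction of the $\alpha$-term is precisely the step that must be handled with care, because enlarging $\alpha(t)$ to $\alpha$ reverses the inequality exactly on the set where $\Psi(t)<0$; the nonnegativity of $\Psi$ is what makes it legitimate.

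Next I would introduce the integrating factor $w(t):=e^{-\alpha(t-\tau)}$, which is smooth and strictly positive, and study $\Phi(t):=w(t)\Psi(t)$. The key computation is a product rule for the right Dini derivative against a $C^1$ positive factor: writing the difference quotient of $w\Psi$ as $w(t+h)\frac{\Psi(t+h)-\Psi(t)}{h}+\frac{w(t+h)-w(t)}{h}\Psi(t)$ and letting $h\to 0+$, the convergences $w(t+h)\to w(t)>0$ and $\frac{w(t+h)-w(t)}{h}\to w'(t)$ give $D^+\Phi(t)=w(t)D^+\Psi(t)+w'(t)\Psi(t)$ (the positivity of $w(t)$ is what lets the $\limsup$ factor out). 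Substituting $w'(t)=-\alpha w(t)$ together with the constant-coefficient inequality then yields $D^+\Phi(t)\le w(t)(\alpha\Psi(t)+\beta)-\alpha w(t)\Psi(t)=\beta\,e^{-\alpha(t-\tau)}$, so the right-hand side is now a fixed continuous function $\gamma(t):=\beta\,e^{-\alpha(t-\tau)}$ carrying no $\Psi$-dependence.

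Finally I would integrate this Dini inequality. I would invoke the comparison principle that if $\Phi$ is continuous on $[\tau,T]$ and $D^+\Phi\le\gamma$ on $]\tau,T[$ with $\gamma$ continuous, then $\Phi(t)-\Phi(\tau)\le\int_\tau^t\gamma(r)\,dr$; this follows by applying to $\Phi(\cdot)-\int_\tau^{\cdot}\gamma$ the classical lemma that a continuous function with nonpositive right Dini derivative is nonincreasing (itself proved by perturbing with $-\varepsilon t$ and using continuity). Evaluating with $\Phi(\tau)=\Psi(\tau)$ and $\Phi(t)=e^{-\alpha(t-\tau)}\Psi(t)$, and multiplying through by $e^{\alpha(t-\tau)}$, reproduces exactly $\Psi(t)\le\Psi(\tau)e^{\alpha(t-\tau)}+\int_\tau^t e^{\alpha(t-r)}\beta\,dr$. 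I do not expect a single hard estimate here; the real work is the careful bookkeeping of one-sided derivatives, namely making the $D^+$ product rule rigorous and justifying the passage from a pointwise-everywhere Dini bound to an integral inequality via the monotonicity lemma, with the sign issue in the very first reduction being the only place where nonnegativity of $\Psi$ is genuinely needed.
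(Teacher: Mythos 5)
Your argument is correct (granted the nonnegativity of $\Psi$, on which more below) but follows a genuinely different route from the paper's. The paper argues by contradiction with an explicit comparison function: for each $\delta>0$ it sets $\varphi_\delta(t)=(\Psi(\tau)+\delta)e^{\alpha(t-\tau)}+\int_\tau^t e^{\alpha(t-r)}(\beta+\delta)\,dr$, which solves $\varphi_\delta'=\alpha\varphi_\delta+\beta+\delta$ and dominates $\Psi$ near $\tau$; if $\varphi_\delta<\Psi$ somewhere, the first crossing time $s=\inf\{t:\varphi_\delta(t)<\Psi(t)\}$ satisfies $\varphi_\delta(s)=\Psi(s)$ and $\varphi_\delta'(s)\le D^+\Psi(s)$, yielding the contradiction $\alpha\varphi_\delta(s)+\beta+\delta\le\alpha\varphi_\delta(s)+\beta$. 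You instead pass to constant coefficients, multiply by the integrating factor $e^{-\alpha(t-\tau)}$, establish the product rule for $D^+$ against a positive $C^1$ factor, and conclude with the classical lemma that a continuous function with nonpositive right Dini derivative is nonincreasing. The two routes bottom out in the same elementary fact (the paper's first-crossing step is essentially a proof of that monotonicity lemma applied to $\Psi-\varphi_\delta$), so neither is deeper; yours is more modular and reusable, the paper's avoids having to justify the $D^+$ product rule. One point you handle more honestly than the paper: replacing $\alpha(t)\Psi(t)$ by $\alpha\Psi(t)$ with $\alpha=\sup\alpha(\cdot)$ does require $\Psi\ge 0$, which you flag and justify from the intended application ($\Psi$ is a distance in Lemma \ref{lemma1base}). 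The paper makes the identical move silently in the chain $\alpha(s)\Psi(s)+\beta(s)\le\alpha\varphi_\delta(s)+\beta$ (recall $\varphi_\delta(s)=\Psi(s)$ there), and the lemma as literally stated is in fact false without some sign hypothesis: on $[0,1]$ take $\beta\equiv 0$, $\alpha(t)=0$ for $t\le 1/2$ and $\alpha(t)=1$ for $t>1/2$, $\Psi\equiv -1$ on $[0,1/2]$ and $\Psi(t)=-e^{t-1/2}$ afterwards; all hypotheses hold, yet $\Psi(t)>-e^{t}=\Psi(0)e^{\alpha t}$ for $t>1/2$. So the extra assumption $\Psi\ge 0$ you import is not a defect of your proof relative to the paper's but a hypothesis genuinely missing from the statement, needed by both arguments.
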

\begin{proof}
Let $\delta>0$ and define $
\varphi_{\delta}(t):=(\Psi(\tau)+\delta) e^{\alpha (t-\tau)}+\int_{\tau}^{t} e^{\alpha(t-r)}(\beta +\delta) d r$. Then, $\varphi_{\delta}^{\prime}(t)=\alpha \varphi_{\delta}(t)+\beta +\delta$ in $\left[\tau, T\left[\right.\right.$ and $\varphi_{\delta}(t)>\Psi(t)$ for all $t \in[\tau, T[$ close to $\tau$. We show that $\varphi_{\delta}(\cdot) \geq \Psi(\cdot)$ for any $\delta>0$. By contraddiction, assume that there are some $\left.\left.\bar t \in\right] \tau, T\right]$ and $\delta>0$ with $\varphi_{\delta}\left(\bar t\right)<\Psi\left(\bar t\right)$. Setting
$
s:=\inf \left\{t \in\left[\tau, \bar t\right] \mid \varphi_{\delta}(t)<\Psi(t)\right\},
$
we obtain that $\varphi_{\delta}\left(s\right)=\Psi\left(s\right) $ and $ \tau<s<\bar t $.
%, because
%$$
%\begin{aligned}
%\varphi_{\delta}\left(\tau\right) &=\lim _{h\ra 0+} \varphi_{\delta}\left(\tau-h\right) \geq \limsup _{h\ra 0+} \Psi\left(\tau-h\right) \geq \Psi\left(\tau\right) \\
%\varphi_{\delta}\left(\tau\right) &=\lim _{h \rightarrow 0 +} \varphi_{\delta}\left(\tau+h\right) \leq \limsup _{h \rightarrow 0 +} \Psi\left(\tau+h\right) \leq \Psi\left(\tau\right).
%\end{aligned}
%$$
Thus, from the definition of $s$,
$$
\begin{aligned}
\varphi_{\delta}^{\prime}\left(s\right)=\liminf _{h\ra 0+} \frac{\varphi_{\delta}\left(s+h\right)-\varphi_{\delta}\left(s\right)}{h} & \leq \limsup _{h\ra 0+} \frac{\Psi\left(s+h\right)-\Psi\left(s\right)}{h} \\
& \leq \alpha\left(s\right) \Psi\left(s\right) +\beta\left(s\right) \\
& \leq \alpha \varphi_{\delta}\left(s\right) +\beta,
\end{aligned}
$$
i.e., $\alpha \varphi_{\delta}\left(s\right)+\beta +\delta \leq \alpha \varphi_{\delta}\left(s\right) +\beta$. Then a contradiction follows.

\end{proof}

Below we give a relaxation of the result in \cite{frankowskaplaskrze1995measviabth}, in which the stronger condition of the absolutely continuity of the tube $E$ is considered. The proof is a mild adaptation, based in turn on \cite{jarnik1977conditions}.

\begin{proposition}\label{prop_viability}
Assume that $E:[0,+\infty[\rightsquigarrow  \bb{R}^{d}$ is continuous and of locally bounded variations in sense of Definition \ref{def_lbv_svm}, let $t_0\in [0,+\infty[$, $x_0\in E(t_0)$, and $\Phi:[0,+\infty[\times \bb R^d\rightsquigarrow \bb R^d$ be a set-valued map with non-empty convex closed values such that
%\sistemanoref{
% g(.,x,\theta) \ttnn{ measurable for any $x,\theta $};\\
% \exists \rho\in \ccal L^1_{loc}([0,+\infty[),\, \sup_{\theta\in \bb B^{m}}|g(t,x,\theta)|\mineq \rho(t)(1+|{x}|); \\
% \forall r>0\,\exists \,k_r\in \ccal L^1_{loc}([0,+\infty[;[0,+\infty[) \ttnn{ such that }\; \\
% \; |g(t,x,\theta) -g(t,y,\theta)|\mineq k_r\rt|x-y|,\forall t\in \halfline, \forall x,y\in r\bb B^{d}, \forall \theta\in \bb B^m.
%}
\eee{
(i)&\quad \Phi(.,x) \ttnn{ is measurable for any $x $};\\
(ii)&\quad \exists \rho\in \ccal L^1_{loc}([0,+\infty[;[0,+\infty[)\\
&\quad \forall r>0\,\exists \,k_r:\halfline \ra \halfline \ttnn{ locally bounded: for a.e. } t\\
&\qquad \qquad (a)\quad \sup_{v\in \Phi (t,x)}|v|\mineq \rho(t)(1+|{x}|),\\
&\qquad \qquad (b)\quad \Phi(t,.) \textnormal{ is $k_r(t)$-Lipschitz on } r\bb B^d.
}
Then, if for a.e. $t>0$ and all $y\in E(t)$
\[{\ttnn{cl co }} \scr T_{\text {graph } E}(t, y)\cap (\{1\} \times \Phi(t,y))\neq \emptyset,\]
for any $T>t_0$ and $t_0\mineq t_i<t_{i+1} \mineq T$, $i=1,...,m$, there exists on $[t_0,T]$ a solution $x\ccd$ of
\equazioneref{x_primo_in_phi}{
x'\rt\in \Phi(t,x\rt) \text{ a.e. }t 
}
satisfying $x(t_0)=x_0$ and
\[ x\left(t_{i}\right) \in E\left(t_{i}\right)\text{ for all } i=1,...,m+1.\]
\end{proposition}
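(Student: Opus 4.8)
The plan is to reduce the statement to a one-interval controllability claim and then realize the required trajectory as the limit of approximate solutions, the limit being controlled by the two distance/Gronwall lemmas above. First I would note that, by concatenation, it suffices to prove the following: given $s\in[t_0,T[$, $\xi\in E(s)$ and $\tau\in]s,T]$, there is an absolutely continuous $x\ccd$ on $[s,\tau]$ with $x(s)=\xi$, $x'\rt\in\Phi(t,x\rt)$ a.e., and $x(\tau)\in E(\tau)$. Applying this successively on the subintervals determined by the prescribed times (taking $\xi=x(t_i)\in E(t_i)$ as the new initial datum) and gluing the pieces yields a solution on $[t_0,T]$ with $x(t_0)=x_0$ and all endpoint constraints, since an a.e.\ solution of \rif{x_primo_in_phi} on each subinterval remains one after concatenation.

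For the one-interval claim I would construct approximate trajectories $x_n\ccd$ by a time-marching (Euler-type) scheme on partitions of $[s,\tau]$ of vanishing mesh: at each node the tangency hypothesis ${\ttnn{cl co}}\,\scr T_{\ttnn{graph }E}(t,y)\cap(\{1\}\times\Phi(t,y))\neq\emptyset$ supplies, at the projection $y$ of the current state onto $E$, a velocity $v\in\Phi(t,y)$ for which $(1,v)$ is (convex-)tangent to $\ttnn{graph }E$, and I advance the state along $v$. The growth bound (ii)(a) gives, through the classical Gronwall argument, a uniform a priori bound $|x_n\rt|\mineq R$ on $[s,\tau]$, hence $|x_n'\rt|\mineq\rho\rt(1+R)$ with $\rho\in\ccal L^1_{loc}$; this makes $\{x_n\}$ equibounded and equi-absolutely continuous, so by Arzel\`a--Ascoli and weak $\ccal L^1$-compactness I extract $x_n\ra x$ uniformly with $x_n'\rightharpoonup x'$. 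Convexity and closedness of the images of $\Phi$, together with the Lipschitz bound (ii)(b), then give $x'\rt\in\Phi(t,x\rt)$ a.e.\ by the standard closure theorem for convex-valued inclusions, and $x(s)=\xi$.

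The heart of the matter is to show the terminal constraint survives in the limit, i.e.\ $\ttnn{dist}(x_n(\tau),E(\tau))\ra 0$. Here I set $\Psi_n\rt:=\ttnn{dist}(x_n\rt,E\rt)$ and invoke Lemma \ref{lemma1base} with $Y\rt:=\{x_n\rt\}$ (a measurable, compact-valued tube with $\dhaus(Y\rt,Y\rs)\mineq\int_s^t|x_n'|\mineq\int_s^t\rho(h)(1+R)\,dh$), which yields that each $\Psi_n$, and the limiting $\Psi$, are continuous and of locally bounded variations — exactly the regularity needed to run the comparison of Lemma \ref{lemma_gronw}. The tangency property at the projected nodes, combined with the $k_r\rt$-Lipschitz dependence (ii)(b) used to pass from the tangent velocity at the nearest point $y\in E\rt$ to an admissible velocity at $x_n\rt$, produces a Dini inequality $D^+\Psi_n\rt\mineq\alpha\rt\Psi_n\rt+\beta_n\rt$ with $\alpha=k_R$ and an error $\beta_n$ collecting the discretization and the tube's motion across a step. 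Lemma \ref{lemma_gronw} then gives $\Psi_n(\tau)\mineq\Psi_n(s)e^{\alpha(\tau-s)}+\int_s^\tau e^{\alpha(\tau-r)}\beta_n\,dr$; since $\Psi_n(s)=0$ and (the plan is that) $\int_s^\tau\beta_n\ra 0$, one obtains $\Psi_n(\tau)\ra 0$, whence $x(\tau)\in E(\tau)$ by closedness of $E(\tau)$ and uniform convergence.

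The step I expect to be the main obstacle is precisely the control of $\beta_n$ and the pointwise validity of the Dini inequality: because $E$ is only of bounded variation and not absolutely continuous, it carries no integrable modulus $\dhaus(E\rt,E\rs)\mineq\int_s^t\ell$, its displacement may concentrate on negligible sets of times, and so the contingent direction selected at a node need not keep the state $o(h)$-close to $E$ \emph{uniformly} over a fixed step, while $D^+\Psi_n$ may blow up on a null set. This is where the adaptation of the Jarn\'{\i}k argument in \cite{jarnik1977conditions} enters: one chooses the partition \emph{adapted} to $E$, refining it where the variation concentrates (using the continuity and LBV guaranteed by Lemma \ref{lemma1base} and the characterization in Lemma \ref{lemma_cont}), so that the per-step tangent error is dominated by the oscillation of $E$ on short subintervals and the cumulative $\int_s^\tau\beta_n$ vanishes as the mesh refines. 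Managing this term, rather than the compactness passage or the closure theorem, is the delicate point that replaces the absolute-continuity estimate of Lemma 4.8 in \cite{frankowskaplaskrze1995measviabth}.
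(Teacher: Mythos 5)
Your reduction to a one-interval claim plus concatenation, and your compactness/closure argument for the limit inclusion, are both fine and match the end of the paper's proof. The genuine gap is exactly the step you flag as the ``main obstacle,'' and your proposed remedy does not close it. Two things go wrong with the single-trajectory Euler scheme. First, the hypothesis only places $(1,v)$ in ${\ttnn{cl co }}\scr T_{\ttnn{graph }E}(t,y)$, i.e.\ it controls $\ttnn{dist}(y+h_iw_i,E(t+h_iu_i))$ along \emph{some} sequence $h_i\ra 0+$, not for all $h$ up to the step size; so advancing the state by a full step along the selected velocity gives no estimate on the distance to $E$ at the next node. Second, even granting a per-step error dominated by the oscillation of $E$ on that subinterval, the cumulative error does not vanish: for a continuous tube of (locally) bounded variation the sum of the oscillations over a partition converges, as the mesh refines, to the total variation on $[s,\tau]$, not to zero. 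No ``adapted'' choice of partition can help, since the variation is by definition a supremum over partitions. This is precisely the estimate that absolute continuity of the tube supplies in Lemma 4.8 of \cite{frankowskaplaskrze1995measviabth} and that is unavailable here; your $\int_s^\tau\beta_n$ stays bounded away from $0$ in general, so $\Psi_n(\tau)\ra 0$ is not obtained.

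The paper avoids the error term altogether by a different decomposition: instead of a single approximate trajectory, it takes $Y(t)$ to be the full reachable set $R[t_0,x_0](t)$ and studies $\Psi(t)=\ttnn{dist}(E(t),Y(t))$, which Lemma \ref{lemma1base} shows is continuous and of locally bounded variations, hence differentiable a.e. At a differentiability point one picks the optimal pair $z\in Y(t)$, $y\in E(t)$; the tangency at $(t,y)$ together with the sequences $h_i\ra0+$ (which suffice because one only needs the derivative of $\Psi$, already known to exist), the Scorza--Dragoni selection of solutions with prescribed initial velocity, and the $k_r(t)$-Lipschitz transfer of velocities from $\Phi(t,y)$ to $\Phi(t,z)$ yield the \emph{error-free} inequality $D^+\Psi(t)\mineq k_r\,\Psi(t)$; Lemma \ref{lemma_gronw} with $\Psi(\tau)=0$ and $\beta=0$ then forces $\Psi\equiv 0$. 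The flow-invariance of the reachable set is what lets one ``re-optimize'' at every instant and is the idea missing from your single-trajectory scheme. If you want to salvage your outline, replace $Y(t)=\{x_n(t)\}$ by the reachable set and derive the homogeneous Dini inequality directly; the discretization and the term $\beta_n$ then disappear.
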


\begin{proof} 
First of all, we notice that, by the Gronwall's Lemma and assumption (ii)-(a), for all $r>0$ there exists $R>r$ such that if an absolutely continuous function $x:[0, t_{1}] \ra \bb {R}^{d}$ satisfies $|x^{\prime}(t)| \mineq \rho(t)(1+|x(t)|)$ a.e. in $\left[0, t_{1}\right]$, $x(0)=x_{0}$, and $|x_0|\mineq r$, then $|x(t)|\mineq R$ for all $t \in\left[0, t_{1}\right]$. Moreover, observe that ${\Phi}$ is integrably bounded on $[0,t_1]$, i.e. for almost all $t \in [0,t_1]$ and all $x \in r \bb B^d$, $|v| \leq \rho(t)(1+R):={\rho_R}(t)$ for any $v\in {\Phi}(t, x)$.

Fix $t_0\in [0,+\infty[$ and $x_0\in E(t_0)$. Consider the map $\Psi$ of the Lemma \ref{lemma1base} applyied with $Y\ccd$ defined by the reacheable set
\eee{
Y(s)&=R[t_0,x_0](s):=\{x(s)\,|\, x\ccd \text{ solution of } \rif{x_primo_in_phi}, \, x(t_0)=x_0\}.
}
We first claim that
\equazioneref{phi_constant_zero}{
\Psi(t)=0 \quad \forall t\in]t_0,+\infty[,
}
arguing by contradiction. Suppose that $T>t_{0}$ with $\Psi(T)>0$ and consider $\tau=\sup \left\{t<T \mid \Psi(t)=0\right\}$. So, $\Psi>0$ on $\left.] \tau, T\right]$ and $\Psi\left(\tau\right)=0$. We divide the proof of the claim \rif{phi_constant_zero} into two steps.

\textsc{Step 1 (Estimate on differentiability points):}
From Lemma \ref{lemma1base}, the well known Scorza-Dragoni property (cfr. e.g. \cite[Chapter 2]{filippov2013differential},\cite{scorza1952applicazione}), and the Mean Value Theorem for set-valued maps (cfr. \cite{aubin2012differential}), there exists a subset $\scr C \subset[0, T]$ of full measure such that for all $t \in \scr C$ and $x \in \bb {R}^{d}$ the following three properties hold: $\Psi$ is differentiable at $t$; for every $v \in \Phi(t, x)$ there exists on $[t,T]$ a solution to the problem
$
y^{\prime} \in \Phi(t, y), \; y(t)=x, \; y^{\prime}(t)=v
$; for every $y\ccd$ solution of \rif{x_primo_in_phi} on $[t_0,t]$ satisfying $y(t)=x$ and for every sequence $h_{i} \rightarrow 0+$ we have
$
\emptyset \neq \Oonn{Limsup}_{i \rightarrow \infty}\{ \frac{y\left(t-h_{i}\right)-x}{h_{i}} \}\subset-\Phi(t, x)
$. Consider $t\in \scr C$ and let $z \in Y(t)$, $y \in E(t)$ satisfy $\Psi(t)=|z-y| $ and put $
p=\frac{z-y}{|z-y |}
$.  We first prove that for all $(u, w) \in \scr T_{\text {graph } E}(t, y)$
\equazioneref{step_1}{
\text{$\Lambda(u,w)=\Phi(t,z)$ if $u\mageq 0$ and $\Lambda(u,w)\neq \emptyset$ if $u<0$}
}
where we denoted $\Lambda(u,w):=\{v\in \Phi(t,z):\Psi'(t)u\mineq \langle p, u v-w\rangle \}$.
%$$
%\begin{array}{ll}
%u \mageq 0 \Longrightarrow \forall v \in \Phi(t, z), & g^{\prime}(t) u \mineq \langle p, u v-w\rangle \\
%u<0 \Rightarrow \exists \bar{v} \in \Phi(t, z), & g^{\prime}(t) u \mineq \langle p, u \bar{v}-w\rangle
%\end{array}
%$$
Indeed, let $(u, w) \in \scr T_{\text {graph } E }(t, y)$ and $h_{i} \rightarrow 0+, u_{i} \rightarrow u, w_{i} \rightarrow w$ satisfying $y+h_{i} w_{i} \in E\left(t+h_{i} u_{i}\right)$ for all $i\in \bb N$. Suppose that there exists a subsequence $\graffe{u_{i_k}}_{k\in \bb N}$ with $u_{i_{k}} \mageq 0$ for all $k\in \bb N$. Let $v \in \Phi(t, z)$ and $x \ccd$ a solution of \rif{x_primo_in_phi} on $[t,T]$ such that $x(t)=z$ and $x^{\prime}(t)=v$. Thus
$$
\Psi\left(t+h_{i_{k}} u_{i_{k}}\right)-\Psi(t) \mineq |x\left(t+h_{i_{k}} u_{i_{k}}\right)-y-h_{i_{k}} w_{i_{k}} |-|z-y|.
$$
Dividing by $h_{i_{k}}$ and taking the limit we get $\Psi^{\prime}(t) u \mineq \langle p, u v-w\rangle$. Otherwise, we have $u_{i}<0$ for all $i$ large enough. In this case, consider a solution $\bar{x} \ccd$ of \rif{x_primo_in_phi} on $[t_0,t]$, $\{i_{k}\}_{k\in \bb N}$, and $\bar{v} \in \Phi(t, z)$ such that $\bar x(t_0)=x_0,\,\bar{x}(t)=z$, and
$
\lim _{k \rightarrow \infty} \frac{\bar{x} (t+h_{i_{k}} u_{i_{k}} )-z}{ h_{i_{k}}}=u \bar{v}
$.
Hence for all $k\in \bb N$ sufficiently large
$$
\Psi\left(t+h_{i_{k}} u_{i_{k}}\right)-\Psi(t) \mineq |\bar{x}\left(t+h_{i_{k}} u_{i_{k}}\right)-y-h_{i_{k}} w_{i_{k}} |-|z-y|.
$$
Dividing by $h_{i_{k}}$ and taking the limit we get $\Psi^{\prime}(t) u \mineq \langle p, u \bar{v}-w\rangle$. Hence, it follows \rif{step_1}. Now, consider $e_{j} \mageq 0$ and $\left(u_{j}, w_{j}\right) \in \scr T_{\text {graph }E}(t, y)$ for $j=0, \ldots, d$ such that
$
\sum_{j=0}^{d} e_{j}=1$ and $u:=\sum_{j=0}^{d} e_{j} u_{j}>0$. Without loss of generality, we may assume that for some natural number $0 \mineq N < d$ and all $j=1,...,N$ we have $u_{j} \mageq 0$ and $u_j<0$ for all $j=N+1,...,d$. From \rif{step_1}, for every $j=N+1,...,d$ there exists $\bar{v}_{j} \in \Phi(t, z)$ such that
$
\Psi^{\prime}(t) u_{j} \mineq \left\langle p, u_{j} \bar{v}_{j}-w_{j}\right\rangle.
$
Thus, applying again \rif{step_1} it follows that
\equazioneref{convessita_phi_somme}{
\Psi^{\prime}(t) (\sum_{j=0}^{N} e_{j} u_{j} ) &\mineq \langle p, \sum_{j=0}^{N} e_{j} u_{j} v-\sum_{j=0}^{N} e_{j} w_{j} \rangle,\quad \forall v \in \Phi(t, z),\\
\Psi^{\prime}(t) (\sum_{j=N+1}^d e_{j} u_{j} ) &\mineq \langle p, \sum_{j=N+1}^d e_{j} u_{j} \bar{v}_{j}-\sum_{j=N+1}^d e_{j} w_{j} \rangle.
}
Notice that, since
$e_{N+1}\left|u_{N+1}\right|+...+e_{d}\left|u_{d}\right|=\left| e_{N+1} u_{N+1}+...+ e_{d} u_{d}\right|< e_{0} u_{0}+...+ e_{N} u_{N}$, we have
\[\theta_j:=\frac{e_{j}\left|u_{j}\right|}{\sum_{j=0}^{N} e_{j} u_{j}}\Rightarrow \sum_{j=N+1}^d \theta_j\in ]0,1[ \]
%\text{and}\quad (1-\frac{\sum_{j=N+1}^d e_{j}\left|u_{j}\right|}{\sum_{j=0}^{N} e_{j} u_{j}})+\sum_{j=N+1}^d \frac{e_{j}\left|u_{j}\right|}{\sum_{j=0}^{N} e_{j} u_{j}}
%=1\]
%$\mu<1,(1-\mu)+\sum_{j>r} \mu_{j}=1 .$
that, due to convexity of $\Phi(t, z)$, it implyes in turn that
$
(1-\sum_{j=N+1}^d \theta_j) v+\sum_{j=N+1}^d \theta_{j} \bar{v}_{j} \in \Phi(t, z)
$ for every $ v \in \Phi(t, z).$ Hence, recalling \rif{step_1} and \rif{convessita_phi_somme}, we obtain for all $v \in \Phi(t, z)$
\eee{
\Psi^{\prime}(t) u = \Psi^{\prime}(t) (\sum_{j=0}^{d} e_{j} u_{j} )&\mineq \langle p, (\sum_{j=0}^{N} e_{j} u_{j}-\sum_{j=N+1}^d e_{j} |u_{j} | ) v\\
&\quad\qquad +\sum_{j=N+1}^d e_{j} ( |u_{j} |+u_{j} ) \bar{v}_{j}-\sum_{j=0}^{d} e_{j} w_{j} \rangle\\
&= \langle p, u v-\sum_{j=0}^{d} e_{j} w_{j} \rangle.
}
So, we have that
%$$
%\begin{array}{l}
%\ttnn{for all }(u, w) \in {\ttnn{cl co }} \scr T_{\text {graph } E}(t, y)\\
%\Psi^{\prime}(t) u \mineq \left\langle p, u v-w\right\rangle,\quad %\forall v \in \Phi(t, z).
%\end{array}
%$$
\eee{
& \forall t\in \scr C,\forall \,(u, w) \in {\ttnn{cl co }} \scr T_{\text {graph } E}(t, y),\, \text{with } u>0 \\
&\exists h_{i} \rightarrow 0+,\exists u_{i} \rightarrow u,\exists w_{i} \rightarrow w:\\
& y+h_{i} w_{i} \in E\left(t+h_{i} u_{i}\right) \ttnn{ for all } i\in \bb N\ttnn{ and}\\
& \lim_{i\ra +\infty} \frac{\Psi(t+h_iu_i)-\Psi(t)}{h_i}=\Psi^{\prime}(t) u \mineq \left\langle p, u v-w\right\rangle,\; \forall v \in \Phi(t, z)
}
$\ttnn{where }z \in Y(t), y \in E(t), \ttnn{ satisfy } \Psi(t)=|z-y|, \ttnn{ and } p:=\frac{z-y}{|z-y |}.$

\textsc{Step 2 (Upper estimate of the Right Dini Derivative):}
Fix $\eps>0$ and $t\in]\tau,T[$. Keep a sequence of positive numbers $h_k\ra0+$ such that $\lim_{k\ra +\infty} \frac{\Psi(t+h_k)-\Psi(t)}{h_k}=D^+\Psi(t)$. From Lemma \ref{lemma1base}, for all $k\in \bb N$ there exists $\delta_k>0$ such that for any sequences $\graffe{s_k}_k,\graffe{\tilde s_k}_k\subset [\tau,T]$
\equazioneref{passo_a_zero}{
|s_k-\tilde s_k|\mineq \delta_k \quad \forall k\; \Longrightarrow\; |\Psi(s_k)-\Psi(\tilde s_k)|\mineq o(h_k)\quad \forall k.
}
Moreover, applying again Lemma \ref{lemma1base}, we can find a sequence of differentiability points $\graffe{t_k}_{k\in \bb N}\subset \scr C$ for $\Psi$ such that
\equazioneref{stima_punti_differ}{
\ttnn{$\eps h_k+\Psi(t)\mageq \Psi(t_k)$ and $|t_k-(t+h_k)|\mineq \frac{\delta_k}{3}$ for all $k\in \bb N$}.
}
%Thus, we get
%\equazioneref{passo_b}{X\mineq \frac{\Psi(t+h_k)-\Psi(t_k)}{h_k}+2\eps\quad \forall k.}
For any $k\in \bb N$ consider $z_k \in Y(t_k)$ and $y_k \in E(t_k)$ such that $\Psi(t_k)=|z_k-y_k| $ and put $p_k=\frac{z_k-y_k}{|z_k-y_k|}$. From Step 1, for any $k\in \bb N$ and any $(u_k,w_k)\in \text{cl co }\scr  T_{\ttnn{graph }E}(t_k,y_k)$, with $u_k>0$, there exist $h^{(k)}_j\rightarrow 0+, u^{(k)}_j \rightarrow u_k$, and $w^{(k)}_{j} \rightarrow w_k$ satisfying
\[y_k+h^{(k)}_j w^{(k)}_{j} \in E (t_k+h^{(k)}_j u^{(k)}_j )\quad \forall j\in \bb N\]
and
\[\lim_{j\ra +\infty}\frac{\Psi(t_k+ h^{(k)}_ju^{(k)}_j)-\Psi(t_k)}{ h^{(k)}_j}\mineq \ps{p_k}{u_k v-w_k} \quad \forall v\in \Phi(t_k,z_k).\]
In particular, it follows that for any $k\in \bb N$ we can choose $j_k\in \bb N$ satisfying
\equazioneref{passo_b_0}{
%|h^{(k)}_j|\mineq \frac{\delta_k \wedge h_k}{3},\quad |t+h_k - (t_k+h^{(k)}_j)|\mineq \frac{\delta_k}{2},\quad \forall j\mageq j_k
|h^{(k)}_j|\mineq h_k,\quad |h^{(k)}_ju^{(k)}_j|\mineq \frac{\delta_k \wedge h_k}{3},\quad \forall j\mageq j_k.
}
Hence, from \rif{passo_a_zero} and  \rif{stima_punti_differ}, we have for any large $k\in \bb N$ and any $j\mageq j_k$
\eee{
\frac{\Psi(t+h_k)-\Psi(t_k+h^{(k)}_ju^{(k)}_j)}{h_k}=o(h_k)
}
and
\equazioneref{passo_c}{
%X\mineq \frac{\Psi(t+h_k)-\Psi(t_k+h^{(k)}_j)}{h_k}+\frac{h^{(k)}_j}{h_k}\cdot\frac{\Psi(t_k+ h^{(k)}_j)-\Psi(t_k)}{ h^{(k)}_j}+ 2\eps\quad \forall k.
&\frac{\Psi(t+h_k)-\Psi(t)}{h_k}\\
&\mineq \frac{\Psi(t+h_k)-\Psi(t_k+h^{(k)}_ju^{(k)}_j)}{h_k}+\frac{\Psi(t_k+ h^{(k)}_ju^{(k)}_j)-\Psi(t_k)}{h_k}+ \eps.
}
From \rif{passo_b_0} and Step 1, we get for every large $k\in \bb N$
\[
\limsup_{j\ra +\infty} \frac{\Psi(t_k+ h^{(k)}_ju^{(k)}_j)-\Psi(t_k)}{ h_k}\mineq \ps{p_k}{u_kv-w_k},\quad \forall v\in \Phi(t_k,z_k).
\]
Using that, inequality in \rif{stima_punti_differ}, assumption (ii)-(b), and since for all $k\in \bb N$ we can find $(1,v_k)\in \text{cl co } \scr T_{\ttnn{graph } E}(t_k,y_k)$ with $v_k\in \Phi(t_k,y_k)$,
%we get $\limsup_j \frac{\Psi(t_k+ h^{(k)}_j)-\Psi(t_k)}{h_k}\mineq 0$.
passing in \rif{passo_c} to the upper limit as $j\ra \infty$ we get
\equazioneref{passaggio_limite_per_granw}{
\frac{\Psi(t+h_k)-\Psi(t)}{h_k}&\mineq o(h_k)+k_r(t_k) \Psi(t_k)+\eps\\
&\mineq o(h_k)( 1+k_r\eps) +\eps + k_r\Psi\rt
\quad \ttnn{for all large $k\in \bb N$}
}
where $k_r=\sup_{t\in[\tau,T]}|k_r\rt|$. Passing first to the limit in \rif{passaggio_limite_per_granw} as $k\ra+\infty$, and then using the arbitrariness of $\eps$, we get
\[D^+\Psi\rt\mineq k_r \Psi\rt.\]
From that and Lemma \ref{lemma_gronw}, the claim \rif{phi_constant_zero} follows immediately.

To conclude the proof, consider $t_0\mineq t_i<t_{i+1} \mineq T$, $i=1,...,m$. Following the induction argument, assume that for some $j \mageq 0$ there exists an absolutely continuous trajectory $y:[t_0,t_i]\ra \bb R^d$ solving \rif{x_primo_in_phi} such that $y\left(t_{i}\right) \in E\left(t_{i}\right)$ for all $i \mineq j$. From the above claim applied with $\left(t_{0}, x_{0}\right)$ replaced by $\left(t_{j}, y\left(t_{j}\right)\right)$ we can find an absolutely continuous trajectory $\bar{y}:[t_j,T]\ra \bb R^d$ solving \rif{x_primo_in_phi} such that $\bar y(t_j)=y(t_j)$ and $\bar{y}\left(t_{j+1}\right) \in E\left(t_{j+1}\right)$. Thus we can extend $y$ on the time interval $\left[t_{j}, t_{j+1}\right]$ by setting $y(t)=\bar{y}(t)$ for all $t \in\left[t_{j}, t_{j+1}\right],$ and the proof is now complete.
\end{proof}

\begin{corollary}[Viability for Continuous of LBV Tubes]\label{corollario_viabilita} If all the assumptions of Proposition \ref{prop_viability} hold, then for any $t_0\in [0,+\infty[$ and $x_0\in E(t_0)$ there exists an absolutely continuous viable solution
\eee{
&x'\rt\in \Phi(t,x\rt)\; \text{ for a.e. }t>t_0\\
&x(t_0)=x_0\\
&x(t) \in E(t)\quad \forall t>t_0.
}
\end{corollary}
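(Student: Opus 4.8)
The plan is to upgrade the finite-node viability granted by Proposition~\rif{prop_viability} to viability at every time through a compactness and diagonalization argument. First I would fix $T>t_0$ and, for each $n\in\bb N$, apply Proposition~\rif{prop_viability} to the uniform partition $t_{n,i}:=t_0+i(T-t_0)/n$, $i=0,\dots,n$, obtaining an absolutely continuous $x_n:[t_0,T]\ra\bb R^d$ solving \rif{x_primo_in_phi} with $x_n(t_0)=x_0$ and $x_n(t_{n,i})\in E(t_{n,i})$ for every $i$. By assumption (ii)-(a) and the Gronwall estimate recalled at the beginning of the proof of Proposition~\rif{prop_viability}, there is $R>0$, depending only on $T$ and $|x_0|$, with $|x_n\rt|\mineq R$ on $[t_0,T]$, whence $|x_n'\rt|\mineq \rho_R\rt:=\rho\rt(1+R)$ for a.e. $t$, uniformly in $n$.

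Next I would extract a convergent subsequence. Since $\rho_R\in\ccal L^1([t_0,T];\halfline)$, the family $\{x_n\}$ is uniformly bounded and equi-absolutely continuous, and $\{x_n'\}$ is uniformly integrable; Arzel\`a--Ascoli together with the Dunford--Pettis theorem then furnish a subsequence (not relabelled) and an absolutely continuous $x:[t_0,T]\ra\bb R^d$ with $x_n\ra x$ uniformly and $x_n'\rightharpoonup x'$ weakly in $L^1([t_0,T];\bb R^d)$. To pass the inclusion to the limit I would invoke the standard convergence theorem for differential inclusions (cfr. \cite{aubin2012differential}): since $\Phi(t,\cdot)$ is $k_r(t)$-Lipschitz, hence continuous, with non-empty closed convex values, the weak $L^1$-limit of the selections $x_n'\rt\in\Phi(t,x_n\rt)$ satisfies $x'\rt\in\Phi(t,x\rt)$ for a.e. $t$, while $x(t_0)=x_0$ follows from the uniform convergence.

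Viability at every time is where the continuity of $E$ enters decisively. Fixing $t\in\,]t_0,T]$, for each $n$ I would pick the nearest node $t_{n,i_n}$, so that $|t_{n,i_n}-t|\mineq (T-t_0)/n\ra0$; from $x_n(t_{n,i_n})\in E(t_{n,i_n})$ we have $\text{dist}(x_n(t_{n,i_n}),E(t_{n,i_n}))=0$, and since $x_n\ra x$ uniformly with $x$ continuous we get $x_n(t_{n,i_n})\ra x\rt$. The joint continuity of $(s,z)\mapsto\text{dist}(z,E(s))$ established in \rif{claim1} then yields $\text{dist}(x\rt,E\rt)=0$, i.e. $x\rt\in E\rt$ because $E\rt$ is closed. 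Finally I would pass from $[t_0,T]$ to $\halfline$ by repeating the construction on the intervals $[t_0,t_0+m]$, $m\in\bb N$, and diagonalizing the resulting nested subsequences to obtain a single absolutely continuous solution on $[t_0,+\infty[$, viable in $E$ at all $t>t_0$. The main obstacle is the closure step: it is exactly the convexity of the images of $\Phi$ that guarantees the weak $L^1$-limit is again a selection of $\Phi$, and one must check that the diagonal extraction preserves simultaneously the locally uniform convergence and the viability constraint across the growing intervals.
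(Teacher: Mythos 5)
Your argument is correct and shares the paper's skeleton: trajectories meeting $E$ at the nodes of ever finer partitions via Proposition \rif{prop_viability}, uniform Gronwall bounds, a weak compactness limit, and an extension to the half-line. Where you genuinely differ is in how viability survives the limit. The paper fixes $\eps>0$ and chooses the mesh so small that the map $\Psi(t)=\text{dist}(E(t),Y_i(t))$ of Lemma \rif{lemma1base}, built on the reachable sets $Y_i=R[t_i,x(t_i)]$, oscillates by at most $\eps$ on each subinterval; this yields the uniform almost-viability estimate $\text{dist}(x_\eps(t),E(t))\leq(1+c)\eps$ for \emph{all} $t\in[t_0,T]$ before the limit $\eps\to0$ is taken, and it is here that the uniform continuity of $\Psi$ (hence the full strength of Lemma \rif{lemma1base}) enters. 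You instead keep exact viability only at the nodes and recover $x(t)\in E(t)$ after the limit, by sending the nearest nodes $t_{n,i_n}\to t$ and invoking the joint continuity of $(s,z)\mapsto\text{dist}(z,E(s))$ from \rif{claim1} together with the closedness of $E(t)$. This is a mild but real simplification: it dispenses with the reachable-set tubes and uses only the first, elementary claim in the proof of Lemma \rif{lemma1base}, at the price of fixing $t$ and chasing nodes. The remaining ingredients --- Arzel\`a--Ascoli plus Dunford--Pettis, the convexity and closedness of the images of $\Phi$ (via Mazur or the Convergence Theorem) to close the inclusion, and either concatenation (the paper's iterative argument) or your diagonal extraction over $[t_0,t_0+m]$ for the half-line --- are standard and correctly deployed; just note that the solutions on $[t_0,t_0+m]$ for different $m$ are unrelated, so the diagonal step must extract a locally uniformly convergent subsequence from the family of these solutions rather than ``nest'' them, after which viability of the limit follows once more from the closedness of each $E(t)$.
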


\begin{proof}
We take the same notations as in the proof of Proposition \ref{prop_viability}. Let $t_0\in [0,+\infty[$, $x_0\in E(t_0)$, $T>t_0$,
%Put $K:=r B$.
%We first show that for every $\varepsilon>0$ there exists $x \in S_{\left[t_{0}, T 1\right.}\left(x_{0}\right)$ such that $\sup _{t \in\left[t_{0}, T\right]} \Oonn{\text{dist}}(x(t), E(t)) \mineq \varepsilon$.
%{\color{blue}
%Pick any $\varepsilon>0$ and let $\delta>0$ be such that for all $\scr C\subset[0, T]$ with $m(A) \mineq \delta$ we have $\int_{A} \mu(s) d s<\varepsilon / 2$ and
%$$
%\forall t_{0} \mineq t<\tau \mineq T, \tau-t<\delta \Longrightarrow E(t) \cap K \subset E(\tau)+\frac{\varepsilon}{2} B
%$$
%}
and $\varepsilon>0$. Pick $\delta>0$ such that the map $\Psi$ in the Lemma \ref{lemma1base} is uniformly continuous in $[t_0,T]$. By replacing $\delta$ with a suitable small one, we can assume that $\int_s^{\bar s}\rho(t)dt\mineq \eps$ for any $s,\bar s\in [t_0,T]$ such that $|s-\bar s|\mineq \delta$. Let a finite partition $t_{0}<t_{1}<\cdots<t_{m}=T$ be such that $t_{i+1}-t_{i}<\delta$.
Then, from the choice of $\delta$, the Gronwall's Lemma, and Proposition \ref{prop_viability}, there exist a constant $c>0$ (depending only on $|x_0|$ and $T$) and a trajectory $x_\eps\ccd$ solving \rif{x_primo_in_phi} on $[t_0,T]$ such that for any $t \in\left[t_{0}, T\right]$ and $i$ with $t_{i} \mineq t\mineq t_{i+1}$
\eee{
\text{dist}(x_\eps(t),E\rt)&\mineq \text{dist}(Y_i\rt,E\rt)+\text{dist}(x_\eps(t),Y_i\rt) \\
&= \text{dist}(Y_i\rt,E\rt)-\text{dist}(Y_i(t_{i+1}),E(t_{i+1}))\\
&\qquad +\text{dist}(x_\eps(t),Y_i\rt)\\
%&= \Psi(t)-\Psi(t_{i+1})+\text{dist}(x_\eps(t_i),R\rt)\\
&\mineq \eps+c \int_{t_i}^t \rho(s)ds\\
&\mineq (1+c)\eps
}
where $Y_i(s)=R[t_i,x(t_i)](s)$. Now, applying again the Gronwall's Lemma, consider a sequence $\{x_{\eps_{j}}\ccd\}_{j\in \bb N}$ converging weakly to some absolutely continuous function $x:[t_0,T]\ra \bb R^d$, where $\varepsilon_{j} \rightarrow 0+$. It follows that $x$ solves \rif{x_primo_in_phi}, with $x(t_0)=x_0$, and $x(t) \in E(t)$ for all $t \in\left[t_{0}, T\right]$. Using an iterative argument, we can extend such solution to the whole halfline $[t_0,+\infty[$ in order to get the statement.
\end{proof}

\subsection{Proof of Theorem \ref{main_theo}}

Next, we recall a Representation result of time-measurable and fiber-convex Hamiltonians recently investigated in (\cite[Proposition 4.1]{basco2020representation}).

\begin{proposition}[Representation, \cite{basco2020representation}]\label{theo_rep_H}
Assume \textnormal{\textbf{H.1}-(a),(b)}  and \textnormal{\textbf{H.2}}. Then, there exists an epigraphical representation
\[ ({\mathfrak{F}_\#},{\mathfrak{L}_\#} ): I \times \bb R^n \times \bb R\times \bb R^{n+1} \ra \bb {R}^{n}\times \bb R\]
with $({\mathfrak{F}_\#},{\mathfrak{L}_\#})(., x,q, \theta)$ measurable for any $x\in \bb R^n$, $q>0$, $\theta\in \bb R^{n+1}$ and satisfying:
\enurom{
% \item[\ttnn{(i)}] \(f(., x,q, \theta)\) and \({\mathfrak{L}}(., x,q, \theta)\) are measurable for any $x\in \bb R^n$, $q>0$, and $\theta\in \bb R^{n+1}$;
\item[\ttnn{(i)}] for any $t\in I$, $x,p\in \bb R^n$, and $q>0$
\eee{
H(t,x,p,q)&=\sup\,\{\ps{(p,-1)}{({\mathfrak{F}_\#}(t,x,q,\theta),{\mathfrak{L}_\#}(t,x,q,\theta))} \: |\;\theta\in \bb B^{n+1}\};
%&=\sup\,\{\ps{(p,-1)}{({\mathfrak{F}_\#}(t,x,q,\theta),H^*(t,x,\theta,q))} \: |\;\theta\in \bb R^{n+1}\}.
}
% $
% H(t,x,p,q)=\sup\,\{\ps{(p,-1)}{({\mathfrak{F}_\#}(t,x,q,\theta),{\mathfrak{L}_\#}(t,x,q,\theta))} \: |\;\theta\in \bb B^{n+1}\}
% $
\item[\ttnn{(ii)}]
$
% &|f(t, x_1, \theta_1)-f(t, x_2, \theta_2)|\mineq 5(n+1)( \sigma_X(t,r) |x_1-x_2|+ |\eta(t,x_1)\theta_1-\eta(t,x_2){\theta_2}|),\\
% &|{\mathfrak{L}}(t, x_1, \theta_1)-{\mathfrak{L}}(t, {x_2}, {\theta_2})| \mineq 5(n+1) ( \sigma_X(t,r) |x_1-x_2|+ |\eta(t,x_1)u-\eta(t,x_2){\theta_2}|),\\
|({\mathfrak{F}_\#},{\mathfrak{L}_\#})(t, x_1,q, \theta_1)-({\mathfrak{F}_\#},{\mathfrak{L}_\#})(t, x_2,q, \theta_2))|\mineq C(t,x_1,x_2,\theta_1,\theta_2,q)
$
for any $t\in I$, $x_1,x_2\in \bb R^n$, and $\theta_1,\theta_2\in \bb R^{n+1}$, where

$
C(t,x_1,x_2,\theta_1,\theta_2,q):=5(n+1)( \sigma_X(t) |x_1-x_2|+ |\eta(t,x_1)\theta_1-\eta(t,x_2){\theta_2}|)$

$\eta(t,x):=\sigma_P(t)(1+|x|)+\gamma(t,x)+|H(t,x,0,q)|$

$\gamma(t,x):=0 \vee \sup\; \{ H^*(t,x,p,q) \;|\; p\in \ttnn{dom } H^*(t,x,.,q)\};$
% &\eta(t,x):=\sigma_P(t)(1+|x|)+\gamma(t,x)+|H(t,x,0)|\\
% &\gamma(t,x):=0 \vee \sup\; \{ H^*(t,x,p,q) : p\in \ttnn{dom } H^*(t,x,.,q)\};
\item[\ttnn{(iii)}] for any $t\in I$, $x\in \bb R^n$, and $q>0$
\begin{alignat*}{3}
&(a)&\quad \ttnn{dom } H^*(t,x,.,q)&={\mathfrak{F}_\#}(t,x,q,\bb B^{n+1})\\
&(b)&\quad \ttnn{graph }H^*(t,x,.,q)&\subset ({\mathfrak{F}_\#},{\mathfrak{L}_\#})(t,x,q,\bb B^{n+1})\\
&(c)&\quad \ttnn{epi }H^*(t,x,.,q)&= ({\mathfrak{F}_\#},{\mathfrak{L}_\#})(t,x,q,\bb R^{n+1}).
\end{alignat*}
% \eee{
% &\ttnn{dom } H^*(t,x,.,q)=f(t,x,q,\bb B^{n+1})\\
% &\ttnn{graph }H^*(t,x,.,q)\subset (f,{\mathfrak{L}})(t,x,q,\bb B^{n+1}).
% }
}

Moreover, if in addition \textnormal{\textbf{H.1-(c)}} holds, then we have the following representation
\eee{
H(t,x,p,q)&=\sup\,\{\ps{(p,-q)}{(f(t,x,\theta),{\mathfrak{L}}(t,x,\theta))}\;|\;\theta\in \bb B^{n+1}\}\\
&=\sup\,\{\ps{(p,-q)}{(f(t,x,\theta),\scr L(t,x,\theta))}\;|\;\theta\in \bb B^{n+1}\}
}
where
\[
f(t,x,\theta):=\mathfrak{ F }_{\#}(t,x,1,\theta) \quad \& \quad {\mathfrak{L}}(t,x,\theta):=\mathfrak{L }_\#(t,x,1,\theta)
\]
\[\scr L(t,x,\theta):=H^*(t,x,f(t,x,\theta),1).\]
% \[\ccal H(t,x,p):= H(t,x,p,1)\]
% \sistemanoref{
% \\
%
% }
\end{proposition}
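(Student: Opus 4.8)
The plan is to derive the representation from Fenchel--Moreau biconjugation together with a Lipschitz parametrization of the (convex) epigraph of the fiber conjugate $H^*(t,x,\cdot,q)$, and to handle the positively homogeneous refinement last by a scaling reduction to $q=1$.

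First I would record the duality underlying (i). Since $H(t,x,\cdot,q)$ is convex by \textbf{H.1}-(b) and, being real-valued and $\sigma_P(t)(1+|x|)$-Lipschitz by \textbf{H.2}-(b), proper and continuous, the Fenchel--Moreau theorem gives $H(t,x,p,q)=\sup_{v}\graffe{\ps{p}{v}-H^*(t,x,v,q)}$, the supremum running over $\ttnn{dom } H^*(t,x,\cdot,q)$. Writing the bracket as $\ps{(p,-1)}{(v,H^*(t,x,v,q))}$, one sees that (i) is equivalent to producing a single-valued map $(\mathfrak F_\#,\mathfrak L_\#)(t,x,q,\cdot)$ whose image of $\bb B^{n+1}$ contains $\ttnn{graph } H^*(t,x,\cdot,q)$ (yielding $\mageq H$) and whose image of all of $\bb R^{n+1}$ lies in $\ttnn{epi } H^*(t,x,\cdot,q)$ (yielding $\mineq H$, since on the epigraph the last coordinate only lowers the pairing). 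These are precisely (iii)-(b) and (iii)-(c), and (iii)-(a) merely records that the first-coordinate image of the ball exhausts the domain. A key preliminary observation makes this feasible: the global Lipschitz bound \textbf{H.2}-(b) forces $\ttnn{dom } H^*(t,x,\cdot,q)\subset \sigma_P(t)(1+|x|)\bb B^n$, so the domain is bounded and a ball of finite radius can parametrize the graph; moreover $H^*(t,x,v,q)\mageq -H(t,x,0,q)$ and $H^*\mineq \gamma(t,x)$ on the domain, so the graph sits in a box whose size is governed by $\eta(t,x)$.

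The core step, and the main obstacle, is to parametrize $E(t,x):=\ttnn{epi } H^*(t,x,\cdot,q)$ by a map $\bb R^{n+1}\ra E(t,x)$ that is onto, carries $\bb B^{n+1}$ onto (at least) the lower boundary $\ttnn{graph } H^*$, is measurable in $t$, and Lipschitz in $(x,\theta)$ with exactly the constant in (ii). I would invoke a Lipschitz parametrization theorem for convex-compact-valued maps: a Hausdorff-$\ell$-Lipschitz map with convex values in $\bb R^{n+1}$ admits a selection-parametrization $g(\cdot,\theta)$, $\theta\in\bb B^{n+1}$, onto the values, whose Lipschitz constants pick up a factor proportional to the ambient dimension $n+1$ -- this is where both the $(n+1)$ and the numerical constant $5$ in $C$ originate. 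To apply it I must verify that $x\mapsto E(t,x)$ (truncated to the box above, then extended vertically) is Hausdorff-Lipschitz with modulus governed by $\eta$: the $x$-stability of the conjugate is read off from \textbf{H.2}-(a), which bounds $|H(t,x_1,\cdot,q)-H(t,x_2,\cdot,q)|$ by $\sigma_X(t)(1+|p|)|x_1-x_2|$ uniformly on the bounded domain, hence forces the conjugate epigraphs to be Hausdorff-close at rate $\sigma_X(t)|x_1-x_2|$, while the horizontal size $\sigma_P(t)(1+|x|)$ and the vertical bounds $\gamma$ and $|H(t,x,0,q)|$ assemble into $\eta(t,x)$. Measurability in $t$ follows from \textbf{H.1}-(a) through the Measurable Selection Theorem (\cite[Theorem 8.1.3]{aubin2009set}) and the Scorza--Dragoni property. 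I expect the delicate point to be propagating the data-constants through the projection/parametrization so as to land on precisely $C$ rather than on some non-explicit multiple of it.

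Finally I would treat the homogeneous refinement. Under \textbf{H.1}-(c), $H(t,x,\cdot,\cdot)$ is positively homogeneous, so $H(t,x,p,q)=q\,H(t,x,p/q,1)$ for $q>0$. Setting $f:=\mathfrak F_\#(\cdot,\cdot,1,\cdot)$ and $\mathfrak L:=\mathfrak L_\#(\cdot,\cdot,1,\cdot)$ and applying (i) at $q=1$, a one-line scaling yields $H(t,x,p,q)=\sup_{\theta\in\bb B^{n+1}}\ps{(p,-q)}{(f(t,x,\theta),\mathfrak L(t,x,\theta))}$. For the equivalent form, note that $\scr L(t,x,\theta):=H^*(t,x,f(t,x,\theta),1)\mineq \mathfrak L(t,x,\theta)$ because $(\mathfrak F_\#,\mathfrak L_\#)$ takes values in the epigraph; since $q>0$, replacing $\mathfrak L$ by the smaller $\scr L$ can only increase the pairing, so the supremum is $\mageq H$, while the reverse inequality is immediate from biconjugation applied to $f(t,x,\theta)\in\ttnn{dom } H^*(t,x,\cdot,1)$. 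Hence both displayed formulas coincide with $H$, completing the argument.
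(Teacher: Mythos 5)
The paper does not prove parts (i)--(iii) at all: it imports them verbatim from \cite[Theorem 4.1]{basco2020representation}, and the only content of its own proof is the ``moreover'' clause, obtained by writing $H(t,x,p,q)=qH(t,x,p/q,1)$ via \textbf{H.1}-(c), applying (i) at $q=1$, and then substituting $\scr L$ for $\mathfrak L$ using (iii). Your treatment of that last part coincides with the paper's, and your justification of the substitution --- $\scr L\mineq \mathfrak L$ because the representation takes values in the epigraph, so replacing $\mathfrak L$ by $\scr L$ can only increase the pairing against $-q$, combined with biconjugation over $\ttnn{dom }H^*(t,x,\cdot,1)=f(t,x,\bb B^{n+1})$ for the reverse inequality --- is in fact more explicit than the paper's appeal to ``the proof of'' the cited theorem. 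Where you diverge is that you attempt to reconstruct (i)--(iii) from scratch: Fenchel--Moreau for the sandwich between graph and epigraph, boundedness of $\ttnn{dom }H^*(t,x,\cdot,q)$ from the Lipschitz bound \textbf{H.2}-(b), vertical bounds via $\gamma(t,x)$ and $H(t,x,0,q)$, and a Lipschitz parametrization theorem for convex-valued maps yielding the dimensional constant $5(n+1)$. This is indeed the architecture of the external proof, so the route is not wrong, but the two steps you explicitly defer --- landing on exactly the constant $C(t,x_1,x_2,\theta_1,\theta_2,q)$ rather than an unspecified multiple of it, and the measurability in $t$ of the resulting parametrization --- are precisely the nontrivial content of the cited theorem, so as written your argument establishes (ii) only up to an unknown dimensional factor. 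In short: for the part the paper actually proves your argument is essentially the same; for the part the paper cites, yours is a plausible but incomplete sketch that buys nothing beyond the citation.
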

\begin{proof}
Let $({\mathfrak{F}_\#},{\mathfrak{L}_\#})$ be the representation given by \cite[Theorem 4.1]{basco2020representation} and satisfying the statements (i)-(iii). Assuming further \textnormal{\textbf{H.1}}-(c), from (i) we have for any $q >0$
\eee{
H(t,x,p,q)&=qH(t,x,\frac{p}{q},1)\\
&=q \sup\,\{\ps{\frac{p}{q}}{{\mathfrak{F}_\#}(t,x,1,\theta)}-{\mathfrak{L}_\#}(t,x,1,\theta)|\theta\in \bb B^{n+1}\}.
}
Thus, from (iii) and the proof of \cite[Theorem 4.1]{basco2020representation}, we get
\eee{
H(t,x,p,q)&=\sup\,\{\ps{{p}}{{\mathfrak{F}_\#}(t,x,1,\theta)}-q{\mathfrak{L}_\#}(t,x,1,\theta)|\theta\in \bb B^{n+1}\}\\
&=\sup\,\{\ps{(p,-q)}{(f(t,x,\theta), H^*(t,x,f(t,x,\theta),1))}\;|\;\theta\in \bb B^{n+1}\}.
}
\end{proof}

In what follows, we consider the representation associated with the Hamiltonian $H$
\[(f,{\mathfrak{L}}):[0,+\infty[\times \bb R^n\times \bb B^{n+1}\ra \bb R^n\times \bb R\]
provided by Proposition \ref{theo_rep_H}.
%\begin{definition}[Value function associated with $(f,{\mathfrak{L}})$]\rm
For any $(t,x)\in [0,+\infty[\times \Omega$, we denote by $\ccal U_\Omega(t,x)$ the -- possibly empty -- set of all pairs $(\xi,\theta):[t,+\infty[\ra \bb R^n\times \bb R^{n+1}$ such
\equazioneref{sistemacontrollo}{
&\xi'\rs=f(s,\xi\rs,\theta\rs),\quad \theta\rs\in \bb B^{n+1} \quad\text{for a.e. }s\in[t,+\infty[\\
&\xi(t)=x\\
&\xi\ccd\subset \Omega.
}
The \textit{value function} $\scr V_{f,{\mathfrak{L}}}:[0,+\infty[\times \Omega \ra \bb R\cup \{\pm \infty\}$ associated with the representation $(f,{\mathfrak{L}})$ is defined by\footnote{If $u\in \ccal L^1_{{loc}}([a,+\infty[;\bb R)$, we denote by $\int_{a}^{\infty}u\rs\,ds:=\lim_{b\ra+\infty}\int_{a}^{b}u\rs\,ds $, provided this limit exists.}
\begin{eqnarray*}
\scr V_{f,{\mathfrak{L}}}(t,x):=
\inf \;\Big\{ \lim_{T\ra+\infty} \;\int_t^{T} {\mathfrak{L}}(s,\xi\rs,\theta\rs)\,ds \;|\;  (\xi,\theta)\in \ccal U_\Omega(t,x)\Big\}
\end{eqnarray*}
where $\inf \emptyset=+\infty$ by convention.

Now, consider two epigraphical weak solutions of the HJB equation, namely $v$ and $w$, satisfying the vanishing condition in \rif{vanishing_cond}. It is sufficient to show that $v\mineq \scr V_{f,{\mathfrak{L}}}\mineq w$. Fix $(t_0,x_0)\in ]0,+\infty[\times \Omega$. By our assumptions, there exists $T>t_0$ such that
\equazioneref{cond_W_finale}{
\modulo{w(t,y)}\mineq\eps,\;\modulo{v(t,y)},\mineq\eps \quad \forall \,t\mageq T,\; \forall y\in \Omega.
}
We divide the proof of Theorem \ref{main_theo} into parts \textbf{(A)}, \textbf{(B)}, and \textbf{(C)}.

{\textbf{(A)}}: We have
\equazioneref{step1}{
w(t_0,x_0)\mageq \scr V_{f,{\mathfrak{L}}}(t_0,x_0).
}
If $w(t_0,x_0)=+\infty$, then $w(t_0,x_0)\mageq \scr V_{f,{\mathfrak{L}}}(t_0,x_0)$. So, assume that $(t_0,x_0)\in \ttnn{dom}\,w$. In order to prove \rif{step1}, we show the following
% for any $(t_0,x_0)\in \ttnn{dom}\,w$, there exists a pair $(\xi,\theta):[t_0,+\infty[\ra \bb R^n\times \bb R^{n+1}$ solving \rif{sistemacontrollo} and such that
\equazioneref{stima_W_per_passaggio_al_limite}{
&\text{$\exists (\xi,\theta):[t_0,+\infty[\ra \bb R^n\times \bb R^{n+1}$ solving \rif{sistemacontrollo} :}\\
&w(t_0,x_0)\mageq w(t,\xi\rt)+\int_{t_0}^{t}\scr L(s,\xi\rs, \theta\rs)\,ds\quad \forall\,t\mageq t_0.
}
We postpone the proof of \rif{stima_W_per_passaggio_al_limite} and we assume temporarily it is valid.
%Let us denote in the following, for the sake of simplicity, by $L(t,x,\theta)$ the Fenchel trasform $\scr L(t,x,\theta)$.
Using the vanishing condition and passing to the upper limit in \rif{stima_W_per_passaggio_al_limite} as $t\ra \infty$ yields,  for every $(\xi\ccd,\theta\ccd)\in \scr U_\Omega(t_0,x_0)$,
\[
w(t_0,x_0)\mageq \limsup_{t\ra+\infty} \int_{t_0}^{t} \scr L(s,\xi\rs,\theta\rs)\,ds.
\]
In particular, it follows that
\begin{gather}\label{eq_2_1}
    \begin{aligned}
       w(t_0,x_0)&\mageq \inf \; \Big\{\limsup_{t\ra +\infty}\int_{t_0}^{t} H^*(s,\xi\rs,\xi'\rs,1) \,ds \;|  
        \;\begin{aligned}
            & \xi\in \ccal W^{1,1}_{loc}([t_0,+\infty[;\Omega),\\
            & \xi(t_0)=x_0
        \end{aligned}
        \;    \Big\} \\
&\,\quad =\alpha(t_0,x_0).
    \end{aligned}
\end{gather}
%
%
%\equazioneref{eq_2_1}{
%w(t_0,x_0)&\mageq \inf \{\limsup_{t\ra +\infty}\int_{t_0}^{t}  H^*(s,\xi\rs,\xi'\rs,1) \,ds \;|\\
%&\qquad \qquad \qquad \xi\in \ccal W^{1,1}_{loc}([t,+\infty[;\bb R^n),\, \xi(t_0)=x_0, \, \xi\ccd\subset \Omega\}\\
%&\quad :=\alpha(t_0,x_0).
%}
By our assumptions, $\alpha(t_0,x_0)>-\infty$. Fix $\eps>0$ and consider a trajectory
$\xi\in \ccal W^{1,1}_{loc}([t_0,+\infty[;\bb R^n)$ with $\xi(t_0)=x_0$ and $ \xi\ccd\subset \Omega$ satisfying
$$\int_{t_0}^{+\infty}   H^*(s,\xi\rs,\xi'\rs,1) ds <\alpha(t_0,x_0)+\eps.$$
We have that
$(\xi'\rs,z'\rs)\in \ttnn{graph }   H^*(s,\xi\rs,.,1)$ for a.e. $s\mageq t_0$,
where we put $z\rs:=\int_t^{s}  H^*(\tau,\xi(\tau),\xi'(\tau),1)\,d \tau$ for all $s\mageq t_0$. Applying now Proposition \ref{theo_rep_H}-(iii)-(b) and the Measurable Selection Theorem, we have that there exists a measurable function $\theta:[t_0,+\infty[\ra \bb B^{n+1}$ such that $(\xi'\rs,z'\rs)=(f(s,\xi\rs,\theta\rs),{\mathfrak{L}}(s,\xi\rs,\theta\rs))$ for a.e. $s\mageq t_0$. So, for all $  t\mageq t_0$
\eee{
& \int_{t_0}^{t} H^*(s,\xi\rs,\xi'\rs,1) \,ds\\
&=\int_{t_0}^{t} z'\rs\,ds =\int_{t_0}^{t} {\mathfrak{L}}(s,\xi\rs,\theta\rs)\,ds,
}
that imply
\eee{
\int_{t_0}^{+\infty}  H^*(s,\xi\rs,\xi'\rs,1) \,ds
\mageq \scr V_{f,{\mathfrak{L}}}(t_0,x_0).
}
We get $\alpha(t_0,x_0)+\eps > \scr V_{f,{\mathfrak{L}}}(t_0,x_0)$. Since $\eps$ is arbitrary, we have
$
\alpha(t_0,x_0)\mageq \scr V_{f,{\mathfrak{L}}}(t_0,x_0).
$
Recalling \rif{eq_2_1}, it follows the inequality in \rif{step1}.

To conclude the proof of \textbf{(A)}, we have only to show \rif{stima_W_per_passaggio_al_limite}. 
Since $w$ is a weak epigraphical solution of the HJB, applying the representation result Proposition \ref{theo_rep_H} there exists a set $C\subset [0,+\infty[$ with $\mu(C)=0$ such that for all $(t,x)\in \ttnn{dom}\,w\cap (([0,+\infty[ \backslash C)\times \Omega)$
\equazioneref{eq_1_1}{
&F(t, x ,\varphi=(r,p,q) )\\
&=-r+\sup\graffe{\ps{f(t,x,\theta)}{-p} +q{\mathfrak{L}}(t,x,\theta) :\theta\in \bb B^{n+1} }\mageq 0\\
& \forall\,(r,p,q)\in {\scr T_{\ttnn{epi }\, w}(t,x,w(t,x))}^-.
}
Hence, from \rif{eq_1_1}, the representation result Proposition \ref{theo_rep_H}, and the Separation Theorem, we deduce that
\equazioneref{con_viabilita_G_intersezione}{
\tonde{\graffe{1}\times \hat \Phi(t,x)}\cap {\ttnn{cl co}}\,\scr T_{\ttnn{epi}\,w }(t,x,w(t,x))\neq \emptyset
}
for all $(t,x)\in \ttnn{dom}\,w\cap (([0,+\infty[ \backslash C)\times \Omega)$ where
%\eee{
%\hat \Phi(t,x):= \{(&f(t,x,\theta),-\scr L(t,x,\theta)-r)\,|\, \\
%&\qquad \theta\in \bb B^{n+1}, r\in [0,c\rt(1+\modulo{x})-\scr L(t,x,\theta)]\}.
%}
\begin{gather*}
    \begin{aligned}
       \hat \Phi(t,x):= \{(&f(t,x,\theta),-\scr L(t,x,\theta)-r) \;\Big|
        \;\begin{aligned}
             &\theta\in \bb B^{n+1},\\
             &r\in [0,\hat \sigma\rt(1+\modulo{x})-\scr L(t,x,\theta)]
        \end{aligned}
        \;    \Big\}.
    \end{aligned}
\end{gather*}
Putting, for all $(t,x,z)\in \bb R\times \bb R^n \times \bb R$,
\[ \Phi(t,x,z):= \hat \Phi(t,x),\]
from assumptions \textbf{H.1-2} and applying Corollary \ref{corollario_viabilita} with $E\rt=\ttnn{epi}\,w(t,\cdot)$, there exists an absolutely continuous trajectory $X_0\ccd= (\xi_0\ccd, z_0\ccd)$ solving
\equazioneref{X_epi_W}{
&X'\rt\in \Phi(t,X\rt)\quad\ttnn{for a.e. }t\in [t_0,t_0+1]\\
&\xi(t_0)=x_0,\,z(t_0)=w(t_0,x_0)\\
% &\quad \forall t\in [t_0,t_0+1]\\
&\xi(t)\in \Omega, \, z\rt\mageq w(t, \xi\rt)\quad\forall t\in [t_0,t_0+1].
}
We claim that for any poisitive $j\in \bb N$ the trajectory $X_0$ admits an extension on the interval $[t_0,t_0+j]$ to a trajectory $X_j$ satisfying \rif{X_epi_W} on $[t_0,t_0+j]$. We proceed by the induction argument on $j\in \bb N$. Let $j\in \bb N$ and suppose that $X_j\ccd=(\xi_j\ccd,z_j\ccd)$ satisfies the claim. Then, using \rif{con_viabilita_G_intersezione} and applying again Corollary \ref{corollario_viabilita} on the time interval $[t_0+j,t_0+j+1]$, we can find a trajectory $X\ccd=(\xi\ccd,z\ccd)$ satisfying
\eee{
&X'\rt\in \Phi(t,X\rt)\quad\ttnn{for a.e. }t\in [t_0+j,t_0+j+1]\\
&\xi(t_0+j)=\xi_j(t_0+j),\,z(t_0+j)=z_j(t_0+j)\\
% & \quad \forall t\in [t_0+j,t_0+j+1]\\
&\xi(t)\in \Omega,\, z\rt\mageq w(t, \xi\rt) \quad\forall t\in [t_0+j,t_0+j+1].
}
Putting $X_{j+1}\rt=(\xi_j\rt,z_j\rt)$ if $t\in [t_0,t_0+j]$ and $X_{j+1}\rt=(\xi\rt,z\rt)$ if $t\in ]t_0+j,t_0+j+1]$, we deduce that $X_{j+1}\ccd$ satisfies our claim. Now, consider the trajectory $X\rt=(\xi\rt,z\rt)$ given by
$X\rt=X_j(t)$ if $t\in [t_0+j,t_0+j+1].$ By the Measurable Selection Theorem, there exist two measurable functions $\theta\ccd$ and $r\ccd$, with $\theta\rt \in \bb B^{n+1}$ and $r\rt\in[0,\hat \sigma\rt(1+\modulo{\xi\rt})-\scr L(t,\xi\rt,\theta\rt)]$ for a.e. $t\mageq t_0$, such that
\[z\rt=w(t_0,x_0)-\int_{t_0}^{t}\scr L(t,\xi\rs, \theta\rs)\,ds-\int_{t_0}^{t} r(s)\,ds\mageq w(t,\xi(t))\]
for all $t\mageq t_0$. Hence,
%\equazioneref{stima_W_per_passaggio_al_limite}{
% w(t_0,x_0)\mageq w(t,\xi\rt)+\int_{t_0}^{t}L(s, \xi\rs, u\rs)\,ds\quad \forall\,t\mageq t_0,
%}
inequality in \rif{stima_W_per_passaggio_al_limite} immediately follows.

\textbf{(B)}:
We show
\equazioneref{step2}{
v(t_0,x_0)\mineq \scr V_{f,{\mathfrak{L}}}(t_0,x_0).
}
If $\scr V_{f,{\mathfrak{L}}}(t_0,x_0)=+\infty$, then $\scr V(t_0,x_0)\mageq v(t_0,x_0)$. So, let us assume that $(t_0,x_0)\in \ttnn{dom}\,\scr V_{f,{\mathfrak{L}}}$. Fix $\eps>0$. Let $(\bar \xi\ccd,\bar \theta\ccd)$ be an optimal trajectory-control pair at $(t_0,x_0)$ for $\scr V_{f,{\mathfrak{L}}}$ and consider $s_i\ra +\infty$ with $\graffe{s_i}_{i\in \bb N}\subset ]T,+\infty[$.
Put $\bar X\ccd=(\bar \xi\ccd,\bar z\ccd)$
where $\bar z\rt=-\int_{t_0}^{t}\scr L(t,\bar\xi\rs,\bar\theta\rs) \,ds$.
From assumptions \textbf{H.1-2} and  \textbf{C.1-2}, applying the Neighboring Feasible Trajectory result Proposition \ref{teo_neig} in Appendix, we deduce that for any $i\in \bb N$ there exists a trajectory $X_i\ccd=(\xi_i\ccd,z_i\ccd)$ solving
\eee{
&X'_i\rt\in \Phi(t,X_i\rt)\quad \ttnn{for a.e. }t\in [t_0, s_i]\\
&X_i(s_i)=(\bar \xi(s_i),\bar z(s_i))\\
&\xi_i(t)\in \ttnn{int}\,\Omega\quad \forall\, t\in [t_0, s_i[
}
and
$$\lim_{i\ra \infty}\;\sup\{ |{X_i\rs-\bar X\rs}|\;|\; s\in [t_0, s_i]\}=0.$$
Hence, by the Measurable Selection Theorem, for any $i\in \bb N$ there exists a measurable selection $\theta_i(t) \in \bb B^{n+1}$ such that $(\xi_i\ccd,\theta_i\ccd)$ satisfies
\eee{
&\xi'_i\rt=f(t,\xi_i\rt,\theta_i\rt) \quad\text{for a.e. }t\in[t_0, s_i]\\
&\xi_i(s_i)=\bar \xi(s_i)\\
&\xi_i\rt \in \ttnn{int}\,\Omega \quad\forall \,t\in[t_0, s_i[
}
and
\equazioneref{lim_x_i_t_0}{
\lim_{i\ra \infty}\xi_i(t_0)= \bar \xi(t_0),
}
\equazioneref{lim_k_int_L}{
\lim_{i\ra \infty}{{\int_{t_0}^{ s_i}
\scr L(t,\bar\xi_i\rs,\bar\theta_i\rs) \,ds=\int_{t_0}^{\infty}\scr L(t,\bar\xi\rs,\bar\theta\rs) \,ds}}.
}
Now, fix $i\in \bb N$ and consider $\graffe{\tau_j}_j\subset ]T,s_i[$ with $\tau_j\ra s_i$. Note that, by the dynamic programming principle, $\xi_i(\tau_j)\in \ttnn{dom}\, \scr V_{f,{\mathfrak{L}}}(\tau_j,\cdot)$ for all $j\in \bb N$. We need the following

\begin{lemma}\label{lemma_parte_interna_2}
For any $0<\tau_0<\tau_1$ and any pair $(\xi,\theta)$ solution of
\equazioneref{lemma_feas_traj_eq}{
&\xi'\rs=f(s,\xi\rs,\theta\rs), \quad \theta\rs\in \bb B^{n+1} \quad \textnormal{for a.e. }s\in [\tau_0,\tau_1] \\
&\xi([\tau_0,\tau_1])\subset \ttnn{int}\,\Omega\\
&(\tau_0,\xi(\tau_0))\in \ttnn{dom}\,v,
}
% trajectory-control pair $( \xi\ccd, u\ccd)$ on $I=[\tau_0,\tau_1]$, with $ \xi([\tau_0,\tau_1])\subset \ttnn{int}\,\Omega$ and $(\tau_0,\xi(\tau_0))\in \ttnn{dom}\,V$, we have
we have
\eee{
( \xi\rt,v(\tau_0,\xi(\tau_0))-\int_{\tau_0}^{t} \scr L(s, \xi\rs, \theta\rs)ds)\in \ttnn{epi}\,v(t,.)\quad \forall t\in [\tau_0,\tau_1].
}

% Assume \ttnn{H.2.1-5} and let $v:[0,+\infty[ \times \Omega\ra \bb R \cup \graffe{+\infty}$ be such that $v\in Hypo_{loc}([0,+\infty[\times\Omega)$.
%% \eee{
%% t\rightsquigarrow \graffe{(x,\lambda )\in \Omega\times \bb R\,|\, \lambda\mineq V(t,x)<+\infty }\ttnn{ is l.a.c.}
%% }
% If there exists a set $E'\subset (0,+\infty)$, with $\mu((0,+\infty)\backslash E')=0$, such that
% \equazioneref{H_parte_interna}{
% &-p_t+\sup_{u\in \bb B} \graffe{\ps{f(t,x,u)}{-p_x} +q{\mathfrak{L}}(t,x,u) }\mineq 0\\
% & \forall\,(p_t,p_x,q)\in {T_{\ttnn{hypo}\, v}(t,x,v(t,x))}^+,\forall (t,x)\in \ttnn{dom}\,v\cap(E'\times \ttnn{int}\,\Omega),
% }
% then for all $0< \tau_0<\tau_1$ and any feasible trajectory-control pair $( \xi\ccd, u\ccd)$ on $I=[\tau_0,\tau_1]$, with $ \xi([\tau_0,\tau_1])\subset \ttnn{int}\,\Omega$ and $(\tau_0,\xi(\tau_0))\in \ttnn{dom}\,V$, we have
% \eee{
% ( \xi\rt,V(\tau_0,\xi(\tau_0))-\int_{\tau_0}^{t} {\mathfrak{L}}(s, \xi\rs, u\rs)ds)\in \ttnn{hypo}\,V(t,.)\quad \forall t\in [\tau_0,\tau_1].
% }
%
\end{lemma}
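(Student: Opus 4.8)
The plan is to read the claim as an invariance property of the epigraph tube $E\rt:=\ttnn{epi}\,v(t,\cdot)$ along the augmented curve $Y\rt:=(\xi\rt,\zeta\rt)$, where $\zeta\rt:=v(\tau_0,\xi(\tau_0))-\int_{\tau_0}^{t}\scr L(s,\xi\rs,\theta\rs)\,ds$. Differentiating, $Y$ is the absolutely continuous solution of $Y'\rt=(f(t,\xi\rt,\theta\rt),-\scr L(t,\xi\rt,\theta\rt))$ for a.e.\ $t$, with $Y(\tau_0)=(\xi(\tau_0),v(\tau_0,\xi(\tau_0)))\in\ttnn{bdr}\,E(\tau_0)$ since $(\tau_0,\xi(\tau_0))\in\ttnn{dom}\,v$. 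The asserted membership $Y\rt\in E\rt$ for all $t\in[\tau_0,\tau_1]$ is precisely the epigraphical inequality $v(t,\xi\rt)\mineq\zeta\rt$, so it suffices to show that the distance $\Psi\rt:=\text{dist}(Y\rt,E\rt)$, which vanishes at $\tau_0$, stays null on $[\tau_0,\tau_1]$.

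First I would record that $E$ is a continuous tube of locally bounded variations, because $v\in\ccal Epi_{loc}$; hence the machinery of Lemma \ref{lemma1base} applies and yields that $(t,x)\mapsto\text{dist}(x,E\rt)$ is continuous and that $\Psi$ is continuous on $[\tau_0,\tau_1]$. The target is then a one-sided Gronwall estimate $D^+\Psi\rt\mineq\omega\rt\Psi\rt$ for a.e.\ $t$, with $\omega$ locally bounded; together with $\Psi(\tau_0)=0$ and $\Psi\mageq 0$, Lemma \ref{lemma_gronw} forces $\Psi\equiv 0$, which is exactly the claim. I would produce this estimate as in Step 2 of Proposition \ref{prop_viability}: pass to a nearby differentiability point $t_k$ of $\Psi$, pick (via a measurable selection) a nearest point $(y_k,\eta_k)\in E(t_k)$ to $Y(t_k)$, and compare the increments of $Y$ with the motion of the tube, the continuity-of-LBV estimates of Lemma \ref{lemma1base} supplying the $o(h_k)$ and the excess terms needed to close the inequality.

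The heart of the matter is the infinitesimal, tangency step. Since $\xi\rt\in\ttnn{int}\,\Omega$ and $v$ is a weak epigraphical solution, the \emph{interior} equality of Definition \ref{def_epi_sol} holds at the relevant boundary points of $\ttnn{epi}\,v$; through the representation Proposition \ref{theo_rep_H} this is \rif{eq_1_1} read as an equality, and — via the Separation Theorem, exactly as in the derivation of \rif{con_viabilita_G_intersezione} — it expresses that the augmented velocity $(f(t,\xi\rt,\theta\rt),-\scr L(t,\xi\rt,\theta\rt))$ is sub-tangent to $\ttnn{epi}\,v(t,\cdot)$ at its boundary, i.e.\ it points into the epigraph. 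Feeding this normal-cone inequality, together with the Lipschitz bounds \textbf{H.2}-(a),(b), into the increment comparison at $(y_k,\eta_k)$ is what turns the difference quotient of $\Psi$ into the bound $\omega\rt\Psi\rt$. Finally the conclusion on the whole of $[\tau_0,\tau_1]$ follows by extending the estimate interval by interval, in the spirit of the induction closing Corollary \ref{corollario_viabilita}.

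The main obstacle I anticipate is precisely this tangency step in the epigraphical orientation: one must extract from the equality form of \rif{eq_1_1} that the \emph{given} velocity $(f,-\scr L)$ — and not merely some viable selection, as in the existence argument of part \textbf{(A)} — is sub-tangent to $\ttnn{epi}\,v(t,\cdot)$, so that the distance to the epigraph cannot increase. Controlling the nearest-point projection onto a merely lower semicontinuous, LBV epigraph, handling the non-smoothness of $v$ at those projection points, and guaranteeing that the selection $t\mapsto(y\rt,\eta\rt)$ is measurable so the normal-cone inequality can be propagated through the Dini-derivative estimate, is where the real effort lies; the LBV continuity estimates of Lemma \ref{lemma1base} are exactly what make the Gronwall closure via Lemma \ref{lemma_gronw} admissible.
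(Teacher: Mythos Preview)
Your plan follows the right architecture --- distance to the epigraph tube, Dini--Gronwall via Lemma~\ref{lemma_gronw}, closure via the LBV machinery --- but the step you flag as ``the main obstacle'' is in fact a genuine gap, and it does not close in the forward direction you propose. From the interior equality in Definition~\ref{def_epi_sol}, written through the representation as in \rif{eq_1_1}, the separation argument yields two different things: the inequality $F\geq 0$ gives only that $(\{1\}\times\hat\Phi(t,x))\cap\ttnn{cl co}\,\scr T_{\ttnn{epi}\,v}\neq\emptyset$ (some velocity is forward tangent --- this is exactly \rif{con_viabilita_G_intersezione} as used in part~\textbf{(A)}), while the inequality $F\leq 0$ gives $\{-1\}\times(-\hat\Phi(t,x))\subset\ttnn{cl co}\,\scr T_{\ttnn{epi}\,v}(t,x,y)$ (every velocity is tangent, but with time component $-1$, i.e.\ backward). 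Your Gronwall step needs the \emph{given} velocity $(1,f(t,\cdot,\theta(t)),-\scr L(t,\cdot,\theta(t)))$ to be tangent at the nearest epigraph point; neither consequence above delivers this, and in general it fails.

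The paper's proof resolves precisely this by a time reversal. It sets $E(t):=\ttnn{epi}\,v(\tau_1-t,\cdot)$, $f^\circ:=-f(\tau_1-\cdot,\cdot,\cdot)$, $\scr L^\circ:=\scr L(\tau_1-\cdot,\cdot,\cdot)$; the backward inclusion $\{-1\}\times(-\hat\Phi)\subset\ttnn{cl co}\,\scr T_{\ttnn{epi}\,v}$ then becomes the forward tangency $(1,f^\circ,\scr L^\circ)\in\ttnn{cl co}\,\scr T_{\ttnn{graph}\,E}$ for every control $\theta$, hence in particular for $\theta^\circ(\cdot)=\theta(\tau_1-\cdot)$. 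At that point one applies Corollary~\ref{corollario_viabilita} directly to the single-valued map $(t,x)\rightsquigarrow\{(f^\circ(t,\xi^\circ(t),\theta^\circ(t)),\scr L^\circ(t,\xi^\circ(t),\theta^\circ(t)))\}$, whose unique trajectory is $(\xi^\circ,z^\circ)$, and reads off the conclusion after undoing the time change. So the missing idea is not a sharper tangency extraction in forward time, but the reversal that converts the $\leq 0$ half of the equality into a usable forward condition for every $\theta$.
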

\begin{proof}
Since $v$ is an epigraphical solution and from Proposition \ref{theo_rep_H}, there exists a set $C\subset [0,+\infty[$ with $\mu(C)=0$ such that for all $(t,x)\in \ttnn{dom}\,v\cap (([0,+\infty[ \backslash C)\times \ttnn{int }\Omega)$
\eee{
&F(t, x ,r,p,q )\\
&=-r+\sup\graffe{\ps{f(t,x,\theta)}{-p} +q\scr L(t,x,\theta) :\theta\in \bb B^{n+1} }= 0\\
& \forall\,(r,p,q)\in {\scr T_{\ttnn{epi}\, v}(t,x,v(t,x))}^-.
}
Notice that, by the separation theorem, this is equivalent to
\eee{
\graffe{-1}\times (-\hat \Phi(t,x))\subset \ttnn{cl }{\ttnn{co}}\,\scr T_{\ttnn{epi}\,v}(t,x,y)
}
for all $y\mageq v(t,x)$ and all $(t,x)\in ((]0,\infty[\backslash C)\times \ttnn{int}\,\Omega)\cap \ttnn{dom}\,v$. Let $0< \tau_0<\tau_1$. Thus
\equazioneref{1_f_L}{
(1,f^{\circ}(t,x,\theta),{\scr L}^{\circ}(t,x,\theta))\in {\ttnn{cl co}}\,\scr T_{\ttnn{graph}\,E}(t,x,y)
}
for a.e. $t\in[0,\tau_1-\tau_0]$, any $(x,y)\in E(t)\cap (\ttnn{int}\,\Omega\times \bb R)$, and any $\theta\in \bb B^{n+1}$, where $f^{\circ}(t,x,\theta):=-f(\tau_1-t,x,\theta)$, ${\scr L}^{\circ}(t,x,u):={\scr L}(\tau_1-t,x,\theta)$, and $E(t):=\ttnn{epi}\,v(\tau_1-t,\cdot)$. Consider a trajectory-control pair $( \xi\ccd, \theta\ccd)$ solving \rif{lemma_feas_traj_eq}, with $ \xi([\tau_0,\tau_1])\subset \ttnn{int}\,\Omega$ and $(\tau_1,\xi(\tau_1))\in \ttnn{dom}\,v$. Denote by $z$ the solution of
\eee{
&z'\rt=-{\scr L}(t, \xi\rt, \theta\rt)\quad \ttnn{for a.e. }t\in [\tau_0,\tau_1]\\
&z(\tau_1)=v(\tau_1, \xi(\tau_1)).
}
Put $ \theta^\circ\ccd=\theta(\tau_1-\cdot)$, and $\xi^\circ\ccd=\xi(\tau_1-\cdot)$, $ z^\circ\ccd=z(\tau_1-\cdot)$ the unique solutions of ${\xi^\circ}'\rt=f^{\circ}(t,\xi^\circ\rt, \theta^\circ\rt)$, $ {z^\circ }'\rt={\scr L}^{\circ}(t,\xi^\circ\rt, \theta^\circ\rt)$ a.e. $t\in [0,\tau_1-\tau_0]$, respectively, with $\xi^\circ(0)=\xi(\tau_1)$ and $ z^\circ(0)=v(\tau_1,\xi(\tau_1))$. 
Applying Corollary \ref{corollario_viabilita} with $\Phi$ given by the single-valued map 
\[(t,x)\rightsquigarrow \graffe{(f^{\circ}(t,\xi^\circ\rt, \theta^\circ\rt),{\scr L}^{\circ}(t,\xi^\circ\rt, \theta^\circ\rt))},\]
it follows the conclusion.
\end{proof}
\noindent We continue the proof of part $\textbf{(B)}$. Consider the solution $g_j\ccd$ of the Cauchy problem
\eee{
&g'\rt=-\scr L(t,\xi_i\rt,\theta_i\rt) \quad\ttnn{for a.e. }t\in [t_0,\tau_j]\\
&g(\tau_j)=v(\tau_j,\xi_i(\tau_j)).
}
From Lemma \ref{lemma_parte_interna_2} above and since $\mathfrak{L}\mageq \scr L$, we have that
%Since $v$ is an epi-grapical weak solution, using the same argument as in part \textbf{(a)}, applying Corollary \ref{corollario_viabilita} and the representation result in Proposition \ref{theo_rep_H} we conclude that
\eee{
\int_{t_0}^{\tau_j}{\mathfrak{L}}(s,\xi_i\rs,\theta_i\rs)\,ds+v(\tau_j,\xi_i(\tau_j))\mageq v(t_0,\xi_i(t_0))\quad \forall j\in \bb N.
}
Hence, by \rif{cond_W_finale},
\eee{
\int_{t_0}^{\tau_j}{\mathfrak{L}}(s,\xi_i\rs,\theta_i\rs)\,ds+\eps\mageq v(t_0,\xi_i(t_0))\quad \forall j\in \bb N,
}
and taking the limit as $j\ra \infty$ we get $\int_{t_0}^{s_i}{\mathfrak{L}}(s,\xi_i\rs,\theta_i\rs)\,ds+\eps\mageq v(t_0,\xi_i(t_0))$. Passing now to the lower limit as $i\ra \infty$, using \rif{lim_x_i_t_0}, \rif{lim_k_int_L}, and the lower semicontinuity of $v$, we have $\int_{t_0}^{\infty}{\mathfrak{L}}(s,\bar \xi\rs,\bar \theta\rs)\,ds+\eps\mageq v(t_0,x_0)$, i.e.,
$\scr V_{f,{\mathfrak{L}}}(t_0,x_0)+\eps\mageq v(t_0,x_0).$
Since $\eps$ is arbitrary, we conclude
$$\scr V_{f,{\mathfrak{L}}}(t_0,x_0)\mageq v(t_0,x_0).$$

\textbf{(C)}: From parts \textbf{(A)} and \textbf{(B)} we have that $v\mineq \scr V_{f,{\mathfrak{L}}} \mineq w \ttnn{ on } ]0,+\infty[\times \Omega$,
that in turn imply $v=w$ on $]0,+\infty[\times \Omega$.
Finally, since $t\rightsquigarrow \ttnn{epi}\,w(t,\cdot)$ is continuous, $w$ is lower semicontinuous, and applying Lemma \ref{lemma_cont}, we have $\liminf_{s\ra 0+,\, y\ra_\Omega x}w(s,y)=w(0,x)$ for all $x\in \Omega$. So, for any $x_0\in \Omega$
\eee{
w(0,x_0)=\liminf_{s\ra 0+,\, y\ra_\Omega x_0}w(s,y)=\liminf_{s\ra 0+,\, y\ra_\Omega x_0}v(s,y)=v(0,x_0).
}

\section*{Appendix}

We provide here a proof of a neighboring feasible trajectory result (used in the proof of Theorem \ref{main_theo}) involving uniform linear estimates on intervals $I=[t_0,t_1]$, with $0\leqslant t_0<t_1$, for state constrained differential inclusion of the form
\equazioneref{sist_app}{
&x' (t) \in Q(t,x (t) )\quad {\rm a.e. } \;\;t\in I\\
&x (t) \in \ccal A\quad \forall\, t\in I
}
where $Q:\ [0,+\infty[ \times \mathbb{R}^n \rightsquigarrow  \mathbb{R}^n$ is a given set-valued map and $\ccal A\subset \mathbb{R}^n$ is a closed non-empty set. A function $x:[t_0,t_1] \to \mathbb{R}^n$ is said to be an \textit{$Q$-trajectory} if it is absolutely continuous and $x' (t) \in Q(t,x (t) )$ for a.e. $t\in [t_0,t_1]$, and a \textit{feasible $Q$-trajectory} if $x (\cdot) $ is an $Q$-trajectory and $x([t_0,t_1]) \subset \ccal A$.

We consider the following assumptions on $Q(\cdot,\cdot)$:
\begin{enumerate}[label=(\roman*)]
\item[] \textbf{Ax.1} $Q$ has closed, non-empty values, a sub-linear growth, and $Q(\cdot,x)$ is Lebesgue measurable for all $x\in \mathbb{R}^n$;

\item[] \textbf{Ax.2} \label{lipF} there exists $\varphi\in \mathcal L_{{\rm loc}}$ such that $Q(t,\cdot)$ is $\varphi (t) $-Lipschitz continuous for a.e. $t\in\halfline$.

\end{enumerate}

We consider also the following controllability assumptions:

\begin{itemize}

\item[] \textbf{Ax.C.1} $\exists \, q\in \mathcal L_{{\rm loc}} \; \mbox{\rm such that } \; Q(t,x)\subset q(t)\,\mathbb{B}^n $ for all $ x\in \ttnn{bdr }\ccal A, $ and a.e. $ t\in \ [0,+\infty[ $;

\item[]  \textbf{Ax.C.2} there exist $ \,\eta>0, \,r>0,\,M\mageq0$ such that\\
for a.e. $t\in \ [0,+\infty[ $, all $y\in \ttnn{bdr }\ccal A + \eta \mathbb{B}^n$ and $v\in Q(t,y)\cap (- \Gamma_{\ccal A}(y;\eta))$\\
there exists $w\in\,Q(t,y)\cap B^n(v,M)$ : $w,\,w-p \in (-\Sigma_{\ccal A}(y;\eta,r))$.

\end{itemize}

\begin{proposition}[Neighboring Feasible Trajectory]\label{teo_neig}
Let us assume \ttnn{\textbf{{Ax.1-2, Ax.C.1-2}}}. Then for all $\delta>0$ there exists a constant $\beta>0$ such that for any $[t_0,t_1]\subset [0,+\infty[$, with $t_1-t_0=\delta$, any $Q$-trajectory $\hat x\ccd$ on $[t_0,t_1]$ with $\hat x(t_0)\in \ccal A$, and any $\rho>0$ with
\eee{
\rho\mageq dist(\hat x\rt, \ccal A) \quad \forall t\in [t_0,t_1],
}
there exists a $Q$-trajectory $x\ccd$ on $[t_0,t_1]$, with $x(t_0)=\hat x(t_0)$, satisfying
\eee{
x\rt \in \ttnn{int}\,\ccal A\ssp{5}\forall t\in ]t_0,t_1]
}
\eee{
|{\hat x\rs-x\rs}| \mineq \beta \rho\quad \forall s\in [t_0,t_1].
}
\end{proposition}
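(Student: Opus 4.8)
The plan is to build the feasible trajectory $x(\cdot)$ by correcting the reference trajectory $\hat x(\cdot)$ with an inward-pointing drift supplied by the controllability hypothesis \textbf{Ax.C.2}, and then to pass from the corrected curve to a genuine $Q$-trajectory by a Filippov-type argument, closing every estimate with Gronwall's inequality. First I would carry out the usual reductions. Fixing $\delta>0$, the sub-linear growth in \textbf{Ax.1} together with Gronwall's Lemma confines every $Q$-trajectory issued from a neighbourhood of $\hat x(t_0)$ to a fixed compact set $K$ on any interval of length $\delta$; on $K$ the functions $\varphi$ and $q$ of \textbf{Ax.2} and \textbf{Ax.C.1}, lying in $\mathcal{L}_{\mathrm{loc}}$, are uniformly integrable, which is exactly what forces the eventual constant $\beta$ to depend only on $\delta$ and the data, not on the location of $[t_0,t_1]$ in $[0,+\infty[$. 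I may then assume $\rho<\eta$, the estimate being trivial when $\rho\ge\eta$ since the diameter of $K$ is bounded by a constant depending only on $\delta$ (a feasible interior trajectory existing by the same inward push described below).

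Next I would produce the inward correction. At a.e.\ $t$ for which $\hat x(t)$ lies in the boundary layer $\mathrm{bdr}\,\mathcal{A}+\eta\mathbb{B}^n$ and $\hat x'(t)\in Q(t,\hat x(t))$ fails to point strictly inward, that is $\hat x'(t)\in -\Gamma_{\mathcal{A}}(\hat x(t);\eta)$, assumption \textbf{Ax.C.2} furnishes $w(t)\in Q(t,\hat x(t))\cap B^n(\hat x'(t),M)$ with $w(t),\,w(t)-\hat x'(t)\in -\Sigma_{\mathcal{A}}(\hat x(t);\eta,r)$; elsewhere I set $w(t)=\hat x'(t)$. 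The Measurable Selection Theorem (\cite[Theorem 8.1.3]{aubin2009set}) makes this choice measurable. The geometric content is that $w(t)-\hat x'(t)\in-\Sigma_{\mathcal{A}}$ increases the depth $\mathrm{dist}(\cdot,\mathcal{A}^{c})$ at rate at least $r$ while $|w(t)-\hat x'(t)|\le M$ keeps the cost bounded. Blending $\hat x'(t)$ with this drift through a scalar gain $\theta(t)\in[0,1]$ activated on an initial layer of length of order $\rho/r$ and reactivated whenever the corrected curve threatens to touch the boundary, I construct an absolutely continuous $z(\cdot)$ with $z(t_0)=\hat x(t_0)$ that enters $\mathrm{int}\,\mathcal{A}$ with a margin of order $\rho$ and satisfies $|z(t)-\hat x(t)|\le c\,\rho$ on $[t_0,t_1]$; the total correction budget is proportional to $\rho$ precisely because $\hat x$ already obeys $\mathrm{dist}(\hat x(t),\mathcal{A})\le\rho$, so the gain $\theta$ can be kept proportional to the current deficit and estimated by a Gronwall argument.

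Finally I would realise $z$ as an actual $Q$-trajectory. Since $z'(t)$ is a convex-type modification of velocities in $Q(t,\hat x(t))$ and $Q(t,\cdot)$ is $\varphi(t)$-Lipschitz by \textbf{Ax.2}, the defect $\mathrm{dist}(z'(t),Q(t,z(t)))$ is controlled by $\varphi(t)\,|z(t)-\hat x(t)|$ plus the correction term, hence is of order $\rho$ in the integrable sense. Filippov's theorem (\cite{aubin2012differential}) then yields a $Q$-trajectory $x(\cdot)$ with $x(t_0)=\hat x(t_0)=z(t_0)$ and $|x(t)-z(t)|\le \int_{t_0}^{t} e^{\int_s^{t}\varphi(\tau)\,d\tau}\,\mathrm{dist}(z'(s),Q(s,z(s)))\,ds\le c'\rho$. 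Combining, $|\hat x(t)-x(t)|\le (c+c')\rho=:\beta\rho$, and it remains to check that the interior margin created for $z$, being of order $\rho$ with a constant I can arrange to dominate $c'$, survives the Filippov perturbation so that $x(t)\in\mathrm{int}\,\mathcal{A}$ for all $t>t_0$.

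The hard part will be the blending step: arranging the inward correction so that feasibility is maintained over the \emph{whole} interval while the perturbation stays linear in $\rho$ with a constant depending only on $\delta$. This is delicate because the data are merely time-measurable, so pointwise-in-$t$ reasoning is unavailable and one must exploit the uniform integrability of the $\mathcal{L}_{\mathrm{loc}}$ class in each Gronwall estimate; it is also where the compatibility requirement $w-\hat x'(t)\in-\Sigma_{\mathcal{A}}$ in \textbf{Ax.C.2}, and not merely $w\in-\Sigma_{\mathcal{A}}$, is essential, since it guarantees that switching the correction on never instantaneously erases the depth already gained.
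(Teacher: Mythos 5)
Your overall strategy --- select the inward velocity $w$ supplied by \textbf{Ax.C.2} at the ``bad'' times via the Measurable Selection Theorem, keep the correction budget of order $\rho/r$, recover a genuine $Q$-trajectory by Filippov's theorem, and close with Gronwall and the uniform integrability of the $\mathcal L_{\rm loc}$ data --- is the same as the paper's, and you correctly single out the decisive role of the condition $w-\hat x'(t)\in-\Sigma_{\mathcal A}$. The gap is precisely in the step you yourself flag as ``the hard part''. First, the blending device (a gain $\theta(t)\in[0,1]$, ``activated on an initial layer of length of order $\rho/r$ and reactivated whenever the corrected curve threatens to touch the boundary'') is never constructed, and as described it is circular: deciding when the corrected curve ``threatens the boundary'' is exactly the feasibility assertion you are trying to prove, and the times at which $\hat x'$ fails to point inward need not form an initial layer. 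Second, since \textbf{Ax.1} does not assume convex values for $Q$, a strict convex combination $\theta(t)w(t)+(1-\theta(t))\hat x'(t)$ generally does not belong to $Q(t,\hat x(t))$; its Filippov defect is then of order $\theta(t)M$ pointwise, integrating to roughly $Mk\rho$ with no small factor, and this cannot in general be absorbed by the inward gain $rk\rho$ (nothing forces $M<r$). Third, you give no device for converting the pointwise normal-cone inequalities into the statement $x(t)\in\mathrm{int}\,\mathcal A$ for \emph{every} $t\in\,]t_0,t_1]$, which is the crux.

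The paper resolves all three issues at once. It introduces the measurable set $\mathcal A_+$ of times $s$ at which $\langle n,\hat x'(s)\rangle\ge 0$ for some unit Clarke normal $n$ at a boundary point within $\eta$ of $\hat x(s)$, and replaces $\hat x'$ \emph{entirely} by $w$ on $\mathcal A_+\cap[t_0,\tau]$, where $\tau$ is the first time $\mu(\mathcal A_+\cap[t_0,t])$ reaches $k\rho$ (with $k>1/r$ fixed in advance); the modified arc $y$ thus has $y'(s)\in Q(s,\hat x(s))$ exactly, so the Filippov defect is only $\varphi(s)\,|y(s)-\hat x(s)|$ and integrates to $2e^{\theta_\varphi(\Delta)}\theta_\varphi(\Delta)Mk\rho$, small because $\Delta$ is small. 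Interior feasibility is then checked with Lebourg's mean value theorem applied to the signed distance $Dist_{\mathcal A}=\mathrm{dist}(\cdot,\mathcal A)-\mathrm{dist}(\cdot,\mathcal A^c)$: its Clarke subgradient at the intermediate point is a convex combination of at most $n+1$ unit normals at projection points, all shown (via the smallness conditions \rif{cond2_1} on $\Delta$, $M$, $\eta$) to lie within $\eta$ of the trajectory, so the $\Sigma$- and $\Gamma$-inequalities of \textbf{Ax.C.2} apply termwise and yield $Dist_{\mathcal A}(y(t))\le -r\mu(\mathcal A_+\cap[t_0,t])$ for $t\le\tau$ and $Dist_{\mathcal A}(y(t))\le Dist_{\mathcal A}(\hat x(t))-rk\rho\le(1-rk)\rho$ for $t>\tau$, after which the Filippov error is absorbed numerically. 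Without some such stopping-time-plus-mean-value-theorem mechanism (or an equivalent), your sketch does not yet constitute a proof.
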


\begin{proof} The proof is inspired by \cite{bascofrankowska2018lipschitz} (see also the references therein for further Neighboring Feasible Trajectory results). We denote by $\theta_g(.)$ the modulus of continuity of a real function $g$.
Fix $\delta>0$ and let $k>0,\,\Delta>0$ be such that $k>1/r$, and
\equazioneref{cond2_1}{
(i)\;8\Delta M\mineq \eta;\; (ii)\; 2e^{\theta_\varphi(\Delta)}{\theta_\varphi(\Delta)M}<r;\;(iii)\;2e^{\theta_\varphi(\Delta)}{\theta_\varphi(\Delta)M}k<rk-1.
}
We notice that, from \ttnn{\textbf{{Ax.2}}} and \ttnn{\textbf{{Ax.C.1}}}, for any $\alpha>0$ there exists $q_\alpha\in \mathcal L_{{\rm loc}}$ such that $Q(t,x)\subset q_\alpha\rt \bb B^n$ for a.e. $t>0$ and all $x\in \text{bdr } \ccal A + \alpha \bb B^n$. So, by the proof of \cite[Theorem 1]{bascofrankowska2018lipschitz}, without loss of generality we can replace $M$ with a suitable greater constant and suppose that $\delta\mineq \Delta$, $\int_J q_{\eta}  \mineq M$ for any $J\subset [0,+\infty)$ with $\mu(J)=\delta$. Moreover, by \rif{cond2_1}-(i) we have that if $\hat x(t_0)\in \ccal A \backslash(\ttnn{bdr }\ccal A+\dfrac{\eta}{4}\bb B^n)$ then the conclusion follows taking $x\ccd=\hat x\ccd$. Then we suppose that $\hat x(t_0)\in (\ttnn{bdr }\ccal A+\dfrac{\eta}{4}\bb B^n)\cap \ccal A$.
%We observe that $\modulo{\hat x(t)-\hat x(t_0)}\mineq M\Delta\mineq \dfrac{\eta}{8}$ for all $t\in[t_0,t_1]$.
Let us denote the non-smooth orientated \text{dist}ance $Dist_\ccal A\ccd$ from the boundary of $\ccal A$ by\footnote{We refer to \cite{clarke1990optimization,vinter00} for the well known properties about such {dist}ance.}
\[Dist_\ccal A(.):=dist(.,\ccal A)-dist(.,{\ccal A^c})\]
and define the measurable set
\begin{gather*}
    \begin{aligned}
       \ccal A_+=\Big\{s\in [t_0,t_1]\cap J\,\Big|
        \;\;\begin{aligned}
             &\exists x\in (\ttnn{bdr }\ccal A)+B^n(\hat x(s),\eta),\\
             &\exists n\in \scr N^C_\ccal A(x)\cap \bb S^{n-1},\\
             & \ps{n}{\hat x'(s)}\mageq 0
        \end{aligned}
        \;\;    \Big\}.
    \end{aligned}
\end{gather*}
where $J=\graffe{s\in[t_0,t_1]\,|\,\hat x'(s)\ttnn{ exists}}$. We show the stated uniform estimate by distinguish the cases $\mu(\ccal A_+)=0$ or $\mu(\ccal A_+)>0$.

\textsc{Case 1}: $\mu(\ccal A_+)=0$.

\noindent Let $t\in(t_0,t_1]$. Then applying the Main Value Theorem (cfr. \cite{clarke1990optimization}), for some $z\rt\in[\hat x(t_0),\hat x(t)]$ and $\xi\rt\in \partial^C Dist_\ccal A(z\rt)$ (where $\partial^C$ stands for the generalized subdifferential, cfr. \cite{aubin2009set,clarke1990optimization}), we have
\eee{
Dist_\ccal A(\hat x\rt)=Dist_\ccal A(\hat x(t_0))+\ps{\xi\rt}{\hat x\rt-\hat x(t_0)}.
}
We recall any vector of the form $v = (x-\tilde x)/dist(x,E)$, with $\tilde x$ in the projection set of $x\neq \tilde x$ onto $E$, is a proximal normal to $E$ at $\tilde x$, and so it lays in $(\ttnn{cl co } \scr N_E(x))$. Hence, from \cite[Theorem 8.49]{rockafellar2009variational} and the well known characterization of the subdifferential of the distance function in terms of projection sets (cfr.   \cite[pp. 340-341]{rockafellar2009variational}), we have that there exist $\lambda_i>0$, $y_i$ in the set of projections of $z\rt$ onto $\ttnn{bdr }\ccal A$, and $\xi_i\in \scr N^C_\ccal A(y_i)\cap \bb S^{n-1}$, for $i=1,...,N$, with $1\mineq N\mineq n+1$, such that $\sum_{i=1}^{N} \lambda_i=1$ and $\xi\rt=\sum_{i=1}^{N} \lambda_i\xi_i$. Moreover, for any $s\in[t_0,t_1]$
\eee{
\modulo{y_i-\hat x\rs}&\mineq \modulo{y_i-z\rt}+\modulo{z\rt-\hat x\rs}\\
&\mineq dist(z\rt,\ttnn{bdr }\ccal A)+\modulo{\hat x(t_0)-\hat x\rs}\\
&\mineq dist(\hat x(t_0),\ttnn{bdr }\ccal A)+\modulo{z\rt-\hat x(t_0)}+\modulo{\hat x(t_0)-\hat x\rs}\\
&\mineq \dfrac{\eta}{4}+2\dfrac{\eta}{8}=\eta.
}
Hence, $Dist_\ccal A(\hat x\rt)=Dist_\ccal A(\hat x(t_0))+\sum_{i=1}^N \lambda_i \int_{t_0}^{t}\ps{\xi_i}{\hat x'\rs}\,ds<0$ for all $t\in ]t_0,t_1]$, and $x\ccd=\hat x\ccd$ satisfies the conclusions.

\textsc{Case 2}: $\mu(\ccal A_+)>0$.

\noindent Applying the Measurable Selection Theorem (cfr. \cite{aubin2009set}), let $w:\ccal A_+\ra \bb R^n$ be the Lebesgue measurable function satisfying $w\rt \in Q(t,\hat x\rt)$ for a.e. $t\in\ccal A_+$, as in \textbf{Ax.C.2}. Define
\begin{equation*}
\tau:=
\begin{cases}
t_1 &\ttnn{if }\mu(\ccal A_+)\mineq k\rho\\
\min \graffe{t\in]t_0,t_1]\,|\, \mu(\ccal A_+ \cap [t_0,t])=k\rho}&\ttnn{otherwise},
\end{cases}
\end{equation*}
and keep $y:[t_0,t_1]\ra \bb R^n$ the arc satisfying $y(t_0)=\hat x(t_0)$ and
\begin{equation}\label{ydef'}
y'\rt:=
\begin{cases}
w\rt &\ttnn{if }t\in \ccal A_+\cap [t_0,\tau]\\
\hat x'(t)&\ttnn{if }t\in {[t_0,t_1]\backslash \tonde{\ccal A_+ \cap [t_0,\tau]}} \cap J.
\end{cases}\end{equation}
So, we have for all $t\in[t_0,t_1]$
\equazioneref{stima_y_1'}{
\norm{\hat x -y}_{\ccal W^{1,1},[t_0,t]}&=\modulo{y(t_0)-\hat x(t_0)}+\int_{t_0}^{t} \modulo{y'(s)-\hat x'(s)}\,ds\\
&=\int_{t_0}^{\tau \wedge t}\modulo{w(s)-\hat x'(s)}\chi_{\ccal A_+\cap [t_0,\tau]}(s)\,ds\mineq 2M\mu(\ccal A_+\cap [t_0,\tau \wedge t]).
}
Applying Filippov's Theorem (cfr. \cite{aubin2009set}), we have that there exists an $Q$-trajectory $x\ccd$ on $[t_0,t_1]$ such that $x(t_0)=y(t_0)$ and
\equazioneref{filippov'}{
\norm{y-x}_{\ccal W^{1,1},[t_0,t]}\mineq e^{\int_{t_0}^t \varphi (\tau)\,d\tau}{\int_{t_0}^t dist(y'(s),Q(s,y(s)))}\,ds
}
for all $t\in [t_0,t_1]$. Now, since for any $s\in[t_0,t_1]$, $y(s)=\hat x(t_0)+\int_{t_0}^{s}y'(s)\,ds=\hat x(s)+\int_{t_0}^{s}\tonde{y'(\xi)-\hat x'(\xi)}\,d\xi$, then for a.e. $s\in[t_0,t_1]$ we have $Q(s,\hat x(s))\subset Q(s,y(s))+\varphi(s)\int_{t_0}^{s}\modulo{y'(\xi)-\hat x'(\xi)}\,d\xi\, \bb B^n$. So, taking note of \rif{ydef'}, we have for a.e. $s\in[t_0,t_1]$
\equazioneref{stima_d_F'}{
dist(y'(s), Q(s,y(s)) )&\mineq dist(y'(s),Q(s,\hat x(s)))+\varphi(s)\int_{t_0}^{s}\modulo{y'(\xi)-\hat x'(\xi)}\,d\xi\\
&=\varphi(s)\int_{t_0}^{s}\modulo{y'(\xi)-\hat x'(\xi)}\,d\xi.
}
Using \rif{stima_d_F'} and \rif{stima_y_1'}, it follows that $dist(y'(s),Q(s,y(s)))\mineq 2\varphi(s)M\mu(\ccal A_+\cap [t_0,\tau \wedge s])$ for a.e. $s\in[t_0,t_1]$. Hence, we obtain the estimates for all $t\in[t_0,t_1]$
\eee{
\int_{t_0}^t dist(y'(s),Q(s,y(s)))\,ds\mineq
2\theta_\varphi(\Delta)M\mu(\ccal A_+\cap [t_0,\tau \wedge t]).
% \begin{cases}
% 2\theta_\varphi(\Delta)M(t-t_0) &t\in[t_0,(t_0+k\rho)\wedge 1]\\
% 2\theta_\varphi(\Delta)Mk\rho &t\in(t_0+k\rho,t_1].
% \end{cases}
% \end{equation}
}
Thus, by \rif{filippov'}, we have $\norm{y-x}_{\ccal W^{1,1},[t_0,t]}\mineq 2e^{\theta_\varphi(\Delta)}{\theta_\varphi(\Delta)M}\mu(\ccal A_+\cap [t_0,\tau \wedge t])$ for all $t\in[t_0,t_1]$, and, using \rif{stima_y_1'}, we get
\eee{
\norm{\hat x-x}_{\ccal W^{1,1},[t_0,t_1]}\mineq \beta \rho\;\text{ with } \beta=2M\tonde{e^{\theta_\varphi(\Delta)}\theta_\varphi(\Delta)+1}k.
}

To conclude the proof, we show that
\[x\rt\in \ttnn{int}\,\ccal A\quad \forall t\in]t_0,t_1].\]

Consider $t\in]t_0,\tau]$. Then, applying the main value theorem, we have $Dist_\ccal A(y\rt)=Dist_\ccal A(y(t_0))+\ps{\xi\rt}{y\rt-y(t_0)}$ for some $z\rt\in[y(t_0),y(t)]$ and $\xi\rt\in \partial^C Dist_\ccal A(z\rt)$. Now, we have that there exist $\lambda_i>0$, $y_i$ in the set of projections of $z\rt$ onto $\ttnn{bdr }\ccal A$, and $\xi_i\in \scr N^C_\ccal A(y_i)\cap \bb S^{n-1}$, for $i=1,...,N$, with $1\mineq N\mineq n+1$, such that $\sum_{i=1}^{N} \lambda_i=1$ and $\xi\rt=\sum_{i=1}^{N} \lambda_i\xi_i$. Since for all $s\in [t_0,t_1]$ and all $i=1,...,N$, $\modulo{y_i-\hat x(s)}\mineq \modulo{y_i-z\rt}+\modulo{z\rt-\hat x(t_0)}+\modulo{\hat x(s)-\hat x(t_0)}$ and 
\eee{
\modulo{y_i-z\rt}&\mineq dist(y\rt,\ttnn{bdr }\ccal A)+\modulo{dist(z\rt,\ttnn{bdr }\ccal A)-dist(y\rt,\ttnn{bdr }\ccal A)}\\
&\mineq dist(y\rt,\ttnn{bdr }\ccal A)+\modulo{y\rt-y(t_0)}\\
&\mineq dist(y(t_0),\ttnn{bdr }\ccal A)+2\modulo{y\rt-y(t_0)},
}
we have that $\modulo{y_i-\hat x(s)}\mineq \dfrac{\eta}{4}+2\dfrac{\eta}{8}+2\dfrac{\eta}{8}<\eta$.
It follows that
\equazioneref{stima_xi_chiave'}{
\ps{\xi\rt}{y\rt-y(t_0)}&=\int_{[t_0,t]\backslash \ccal A_+} \ps{\xi\rt}{\hat x'\rs}\,ds+\int_{[t_0,t]\cap \ccal A_+} \ps{\xi\rt}{w\rs}\,ds\\
&=\sum_{i=1}^{N} \int_{[t_0,t] \backslash \ccal A_+} \ps{\xi_i}{\hat x'\rs}\,ds+\sum_{i=1}^{N} \int_{[t_0,t]\cap \ccal A_+} \lambda_i\ps{\xi_i}{w\rs}\,ds\\
&\mineq -r\mu(\ccal A_+\cap [t_0,t]).
}
Hence, using \rif{cond2_1}-(ii),
\equazioneref{stima_finale'}{
Dist_\ccal A(x\rt)&\mineq \modulo{x\rt-y\rt}+Dist_\ccal A(y\rt)\\
&\mineq \tonde{2e^{\theta_\varphi(\Delta)}{\theta_\varphi(\Delta)M}-r}\mu(\ccal A_+\cap [t_0, t])\mineq 0.
}
The equality in \rif{stima_xi_chiave'} occurs only if $\mu([t_0,t]\backslash \ccal A_+)=0$, and in that case it follows that $\mu(\ccal A_+\cap [t_0,t])$ has positive measure. It follows that the inequality in \rif{stima_finale'} is strict.

Consider $t\in]\tau,t_1]$. By the Main Value Theorem (cfr. \cite[Theorem 2.3.7]{clarke1990optimization}), for some $z\rt\in[\hat x\rt,y(t)]$ and $\xi\rt\in \partial^C Dist_\ccal A(z\rt)$, we have $Dist_\ccal A(y\rt)=Dist_\ccal A(\hat x\rt)+\ps{\xi\rt}{y\rt-\hat x\rt}$.
Furthermore, arguing as in the previous case, consider $\lambda_i>0$, $y_i$ in the set of projections of $z\rt$ onto $\ttnn{bdr }\ccal A$, and $\xi_i\in \scr N^C_\ccal A(y_i)\cap \bb S^{n-1}$, for $i=1,...,N$, with $1\mineq N\mineq n+1$, such that $\sum_{i=1}^{N} \lambda_i=1$ and $\xi\rt=\sum_{i=1}^{N} \lambda_i\xi_i$.
We notice that for all $i=1,...,N$ and $s\in ]\tau,t_1]$, $\modulo{y_i-\hat x(s)}\mineq \modulo{y_i-z\rt}+\modulo{z\rt-\hat x\rs}$
and
\eee{
&\modulo{z\rt-\hat x\rs}\\
&\mineq \dfrac{1}{2}(\modulo{z\rt-y\rt}+\modulo{y\rt-y(t_0)}+\modulo{\hat x(t_0)-\hat x\rs}+\modulo{\hat x\rs-\hat x\rt}\\
&\qquad \qquad \qquad +\modulo{\hat x\rt-z\rt})\\
&\mineq \modulo{\hat x\rt-y\rt}+\dfrac{1}{2}\tonde{\modulo{y\rt-y(t_0)}+\modulo{\hat x(t_0)-\hat x\rs}+\modulo{\hat x\rs-\hat x\rt}}.
}
Now, since $z\rt=ay\rt+(1-a)\hat x\rt$, where $a\in[0,1]$, we have that $\modulo{y_i-z\rt}\mineq Dist_\ccal A(\hat x(t_0))+\modulo{\hat x(t_0)-z\rt}\mineq \dfrac{\eta}{4}+\modulo{y\rt-y(t_0)}+\modulo{\hat x\rt-\hat x(t_0)}.$
Summing up we obtain $\modulo{y_i-\hat x(s)}\mineq \tonde{\dfrac{\eta}{4}+2\dfrac{\eta}{8}}+2\dfrac{\eta}{8}+\dfrac{1}{2}3\dfrac{\eta}{8}<\eta$.
Then
\eee{
\ps{\xi\rt}{y\rt-\hat x\rt}&=\int_{t_0}^t \ps{\xi\rt}{ y'\rs-x'\rs}\,ds\\
&=\int_{\ccal A_+\cap [t_0,\tau]} \ps{\xi\rt}{ w\rs-x'\rs}\,ds\\
&=\sum_{i=1}^{N} \int_{\ccal A_+\cap [t_0,\tau]} \lambda_i\ps{\xi_i}{ w\rs-x'\rs}\,ds\\
&\mineq -r\mu(\ccal A_+\cap [t_0,\tau]).
}
Finally, by \rif{cond2_1}-(iii),
\eee{
Dist_\ccal A(x\rt)&\mineq \modulo{x\rt-y\rt}+Dist_\ccal A(y\rt)\\
&\mineq 2e^{\theta_\varphi(\Delta)}{\theta_\varphi(\Delta)Mk \rho}+Dist_\ccal A(y\rt)\\
&\mineq Dist_\ccal A(\hat x\rt)+\tonde{2e^{\theta_\varphi(\Delta)}{\theta_\varphi(\Delta)M}-r}k \rho\\
&\mineq (1+(2e^{\theta_\varphi(\Delta)}{\theta_\varphi(\Delta)M}-r)k) \rho<0.
}
\\
\end{proof}

\bibliographystyle{plain}
\bibliography{BIBLIO_VBHF_comparison}

\begin{thebibliography}{10}

\bibitem{aubin2012differential}
J.-P. Aubin and A.~Cellina.
\newblock {\em Differential inclusions: set-valued maps and viability theory},
  volume 264.
\newblock Springer Science \& Business Media, 2012.

\bibitem{aubin2009set}
J.-P. Aubin and H.~Frankowska.
\newblock {\em Set-valued analysis}.
\newblock Modern Birkh\"auser Classics. Birkh\"auser Boston, Inc., Boston, MA,
  2009.

\bibitem{barles1984existenceresults}
G.~Barles.
\newblock Existence results for first order {H}amilton {J}acobi equations.
\newblock {\em Ann. Inst. H. Poincar\'e Anal. Non Lin\'eaire}, 1(5):325--340,
  1984.

\bibitem{basco2020representation}
V.~Basco.
\newblock Representation of weak solutions of convex
  {H}amilton--{J}acobi--{B}ellman equations on infinite horizon.
\newblock {\em Journal of Optimization Theory and Applications},
  187(2):370--390, 2020.

\bibitem{basco2021exploiting}
V.~Basco, P.~M. Dower, W.~M. McEneaney, and I.~Yegorov.
\newblock Exploiting characteristics in stationary action problems.
\newblock {\em Applied Mathematics \& Optimization}, 84(1):733--765, 2021.

\bibitem{bascofrankowska2018lipschitz}
V.~Basco and H.~Frankowska.
\newblock Lipschitz continuity of the value function for the infinite horizon
  optimal control problem under state constraints.
\newblock In F.~Alabau-Boussouira et~al., editor, {\em Trends in Control Theory
  and Partial Differential Equations}, volume~32 of {\em Springer INdAM
  Series}, pages 15 -- 52. Springer International Publishing, 1985.

\bibitem{basco2019hamilton}
V.~Basco and H.~Frankowska.
\newblock Hamilton--{J}acobi--{B}ellman equations with time-measurable data and
  infinite horizon.
\newblock {\em Nonlinear Differential Equations and Applications NoDEA},
  26(1):7, 2019.

\bibitem{clarke1990optimization}
F.~H. Clarke.
\newblock {\em Optimization and nonsmooth analysis}, volume~5 of {\em Classics
  in Applied Mathematics}.
\newblock Society for Industrial and Applied Mathematics (SIAM), Philadelphia,
  PA, second edition, 1990.

\bibitem{crandalllionsevans1984someproperties}
M.~G. Crandall, L.~C. Evans, and P.-L. Lions.
\newblock Some properties of viscosity solutions of {H}amilton-{J}acobi
  equations.
\newblock {\em Trans. Amer. Math. Soc.}, 282(2):487--502, 1984.

\bibitem{crandalllions1983viscosity}
M.~G. Crandall and P.-L. Lions.
\newblock Viscosity solutions of {H}amilton-{J}acobi equations.
\newblock {\em Trans. Amer. Math. Soc.}, 277(1):1--42, 1983.

\bibitem{dontchev2009implicit}
A.~L. Dontchev and R.~T. Rockafellar.
\newblock {\em Implicit functions and solution mappings}, volume 543.
\newblock Springer, 2009.

\bibitem{filippov2013differential}
A.~F. Filippov.
\newblock {\em Differential equations with discontinuous righthand sides:
  control systems}, volume~18.
\newblock Springer Science \& Business Media, 2013.

\bibitem{frankowska1993lowersemicontinuous}
H.~Frankowska.
\newblock Lower semicontinuous solutions of {H}amilton-{J}acobi-{B}ellman
  equations.
\newblock {\em SIAM J. Control Optim.}, 31(1):257--272, 1993.

\bibitem{frankowska1996measurable}
H.~Frankowska and S.~Plaskacz.
\newblock A measurable upper semicontinuous viability theorem for tubes.
\newblock {\em Nonlinear Analysis: Theory, Methods \& Applications},
  26(3):565--582, 1996.

\bibitem{frankowskaplaskrze1995measviabth}
H.~Frankowska, S.~Plaskacz, and T.~Rze\.zuchowski.
\newblock Measurable viability theorems and the {H}amilton-{J}acobi-{B}ellman
  equation.
\newblock {\em J. Differential Equations}, 116(2):265--305, 1995.

\bibitem{ishii1985hbjediscontinuous}
H.~Ishii.
\newblock Hamilton-{J}acobi equations with discontinuous {H}amiltonians on
  arbitrary open sets.
\newblock {\em Bull. Fac. Sci. Engrg. Chuo Univ.}, 28:33--77, 1985.

\bibitem{ishii1992perron}
H.~Ishii.
\newblock Perron's method for monotone systems of second-order elliptic partial
  differential equations.
\newblock {\em Differential Integral Equations}, 5(1):1--24, 1992.

\bibitem{jarnik1977conditions}
J.~Jarn{\'\i}k and J.~Kurzweil.
\newblock On conditions on right hand sides of differential relations.
\newblock {\em {\v{C}}asopis {P}ro {P}{\v{e}}stov{\'a}n{\'\i} {M}atematiky},
  102(4):334--349, 1977.

\bibitem{rockafellar2009variational}
R.~T. Rockafellar and R.~J.-B. Wets.
\newblock {\em Variational analysis}, volume 317.
\newblock Springer Science \& Business Media, 2009.

\bibitem{scorza1952applicazione}
G.~Scorza-Dragoni.
\newblock Una applicazione della quasi-continuita semiregolare delle funzioni
  misurabili rispetto ad una e continue rispetto ad un’altra variabile.
\newblock {\em Atti Acc. Naz. Lincei}, 12:55--61, 1952.

\bibitem{souganidis1985existence}
P.~E. Souganidis.
\newblock Existence of viscosity solutions of {H}amilton-{J}acobi equations.
\newblock {\em J. Differential Equations}, 56(3):345--390, 1985.

\bibitem{vinter00}
R.~B. Vinter.
\newblock {\em Optimal Control}.
\newblock Birkh\"auser, Boston, MA, 2000.

\end{thebibliography}

\end{document}